\documentclass{amsart}
\usepackage{amsrefs}
\title{On the motivic Peterson conjecture}
\subjclass[2010]{55S10, 14F42}
\author{Masaki Kameko}
\address{Department of Mathematical Sciences,
Shibaura Institute of Technology, 
307 Minuma-ku Fukasaku, Saitama-City 337-8570, Japan}
\thanks{This work was supported by JSPS KAKENHI Grant Numbers JP25400097 and JP17K05263.}
\email{kameko@shibaura-it.ac.jp}
\keywords{Steenrod algebra, motivic cohomology, hit problem.}
\newtheorem{theorem}{Theorem}[section]
\newtheorem{proposition}[theorem]{Proposition}
\newtheorem{lemma}[theorem]{Lemma}
\newtheorem{conjecture}[theorem]{Conjecture}
\theoremstyle{definition}

\newtheorem{remark}[theorem]{Remark}

\begin{document}

\begin{abstract}
We show that the analogue of the Peterson conjecture on the action of Steenrod squares does not hold
in motivic cohomology.
\end{abstract}

\maketitle

\newcommand{\sq}{\mathrm{Sq}}

\section{Introduction}\label{section:1}


The mod $2$ cohomology $H^{*}(BV_n)$ of the classifying space of the elementary abelian $2$-group 
$V_n=(\mathbb{Z}/2)^n$ of rank $n$ is a polynomial algebra of $n$-variables
\[
\mathbb{Z}/2[x_1, \dots, x_n],
\]
where $\deg  x_i=1$. We denote it by $P_n$. Also, we denote by $P_n^d$ the subspace of $P_n$ spanned by monomials of degree $d$. Then, we have
\[
P_n=\bigoplus_{d=0}^{\infty} P_n^d. 
\]
Throughout  this paper,   an element
means a homogeneous element.
 The action of the Steenrod squares 
$\sq^1$, $\sq^2$, $\sq^3, \dots$ on $P_n$  is given by the unstable condition
\begin{align*}
\sq^1 (x_i)&=x_i^2,\\
 \sq^a (x_i)&=0 \quad \mbox{for $a>1$},
\end{align*}
and the Cartan formula 
\[
\sq^{a} (x\cdot y)= \sum_{b=0}^a \sq^{a-b} (x)\cdot \sq^b (y), 
\]
where $x, y \in P_n$ and   $\sq^0$ is the identity map, that is, $\sq^{0}(x)=x$, $\sq^0 (y)=y$.

Finding  a minimal set of generators of $P_n$ as a module over the mod $2$ Steenrod algebra 
$\mathcal{A}$ is known as the hit problem. One may consider the quotient space
\[
QP_n^d=P_n^{d} /(\mathcal{A}_{+}(P_n)\cap P_n^d),
\]
where
$\mathcal{A}_{+}$ is the set of positive degree elements in the mod $2$ Steenrod algebra $\mathcal{A}$ and 
\[
\mathcal{A}_{+}(P_n)=\{ a x  \;|\; a \in \mathcal{A}_{+}, \; x \in P_n \}.
\] 
Then, the hit problem is the problem of finding a basis for $QP_n^d$.
Since its formulation in mid-1980 by Peterson through the computation of $QP_2^d$,
the hit problem has been and is studied by many mathematicians. Among recently published papers and books  are 
Ault \cite{ault-2014}, Pengelley and Williams \cite{pengelley-williams-2015}, and 
Sum  \cite{sum-2015} and Walker and Wood  \cite{walker-wood-2018}.


The most fundamental result on the hit problem is Wood's theorem. It was known as 
Peterson's conjecture before Wood proved it.  
The Peterson conjecture was formulated in terms of the number $\alpha(d+n)$ of $1$'s in the binary expression of $d+n$. 
However, we may state it in terms of the function $\beta(d)$ defined by
\[
\beta(d)=\min\{\;  s \in \mathbb{N} \; |\; d=(2^{i_1}-1)+\cdots + (2^{i_s}-1), \; i_1, \dots, i_s \in \mathbb{N} \; \}.
\]


\begin{theorem}[Wood \cite{wood-1989}] \label{theorem:1.1}
 If  $\beta(d)>n$, then 
$
 \dim QP_n^d=0.
$
\end{theorem}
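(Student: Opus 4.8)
The plan is to prove the equivalent statement that $P_n^d\subseteq\mathcal{A}_{+}(P_n)$ whenever $\beta(d)>n$; since $P_n^d$ is spanned by monomials, it suffices to show that each monomial of degree $d$ is \emph{hit}, i.e.\ lies in $\mathcal{A}_{+}(P_n)$. I would do this by a descending induction over the monomials of degree $d$, ordered by a well-founded measure extracted from the binary expansions of the exponents (a refinement of the ``weight sequence'' $\omega(x^a)=(\omega_1,\omega_2,\dots)$, where $\omega_k$ is the number of $i$ with the coefficient of $2^{k-1}$ in $a_i$ equal to $1$). Choosing this measure correctly is the crux, so I first isolate the two facts it must reconcile.

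The first is where the hypothesis enters. Call $x^a=x_1^{a_1}\cdots x_n^{a_n}$ a \emph{spike} if every $a_i$ has the form $2^{b_i}-1$; a spike of degree $d$ exists in $P_n$ exactly when $d$ is a sum of at most $n$ integers of the form $2^b-1$, that is, exactly when $\beta(d)\le n$. Therefore, if $\beta(d)>n$, no monomial of degree $d$ is a spike, so each such monomial $x^a$ has a variable $x_j$ whose exponent $a_j$ is not of the form $2^b-1$, i.e.\ whose binary expansion has a \emph{gap}: some $k\ge1$ with the coefficient of $2^k$ in $a_j$ equal to $1$ and that of $2^{k-1}$ equal to $0$.

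The second is a reduction step. For such an $x_j$ and gap $k$, write $x^a=x_j^{a_j}\cdot G$ with $G$ free of $x_j$. Applying the Cartan formula to $x_j^{2^{k-1}}\cdot(x_j^{a_j-2^k}G)$ and using $\mathrm{Sq}^{2^{k-1}}(x_j^{2^{k-1}})=x_j^{2^k}$ and $\mathrm{Sq}^{i}(x_j^{2^{k-1}})=0$ for $0<i<2^{k-1}$ (both immediate from Lucas' theorem) gives
\[
x^a=\mathrm{Sq}^{2^{k-1}}\!\bigl(x_j^{\,a_j-2^{k-1}}G\bigr)+x_j^{2^{k-1}}\cdot\mathrm{Sq}^{2^{k-1}}\!\bigl(x_j^{\,a_j-2^{k}}G\bigr).
\]
Since $2^{k-1}\ge1$, the first term is in $\mathcal{A}_{+}(P_n)$, so $x^a\equiv x_j^{2^{k-1}}\cdot\mathrm{Sq}^{2^{k-1}}(x_j^{a_j-2^{k}}G)\pmod{\mathcal{A}_{+}(P_n)}$, where the Steenrod square now acts on a monomial of the strictly smaller degree $d-2^k$. (Equivalently, this move can be packaged using the antipode $\chi$ of $\mathcal{A}$ and the classical identity $\chi(\mathrm{Sq}^{t})(x_j)=x_j^{\,t+1}$.)

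To conclude, I would feed the reduction step into the induction: the monomials occurring in $x_j^{2^{k-1}}\cdot\mathrm{Sq}^{2^{k-1}}(x_j^{a_j-2^{k}}G)$ should be strictly smaller than $x^a$ for the measure, hence hit by the inductive hypothesis, hence $x^a$ is hit; there is no separate base case, since a monomial minimal for the measure is rewritten by the step as a sum over the empty set, i.e.\ as $0$ modulo $\mathcal{A}_{+}(P_n)$. The main obstacle is exactly the verification that every monomial produced by the reduction does strictly decrease the chosen measure. This is delicate: the Cartan expansion, and especially the final multiplication by $x_j^{2^{k-1}}$, introduce carries in the binary expansions which, in unfavourable configurations, can leave the naive measure (for instance the left-lexicographic order on $\omega$) unchanged. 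Overcoming this should require using $\beta(d)>n$ more globally than merely through the existence of one gap --- e.g.\ by choosing the gap $k$ and the variable $x_j$ optimally, and by taking for the measure a suitable lexicographic combination of $\omega$ with a secondary quantity (or an auxiliary induction on the degree). Once such a measure is in hand, the induction closes and yields $P_n^d=\mathcal{A}_{+}(P_n)\cap P_n^d$, i.e.\ $\dim QP_n^d=0$.
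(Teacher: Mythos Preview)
The paper does not prove Theorem~1.1: it is quoted from Wood~\cite{wood-1989} as background for the motivic discussion, so there is no in-paper argument to compare your attempt against.

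On the proposal itself: the reduction identity you derive is correct, but what you have written is a plan, not a proof, and the missing ingredient is substantive. You leave the well-founded measure unspecified and explicitly flag its construction as ``the main obstacle''; that obstacle is real, not cosmetic. The natural candidate --- left-lexicographic order on the weight sequence $\omega$ --- provably fails. Suppose some $x_l$ with $l\ne j$ has bit $k-1$ of its exponent equal to~$1$ and bit~$k$ equal to~$0$. Then in $\sq^{2^{k-1}}\bigl(x_j^{\,a_j-2^k}G\bigr)$ the Cartan summand with $t_l=2^{k-1}$ and all other $t_i=0$ survives (the binomial coefficient $\binom{a_l}{2^{k-1}}$ is odd by Lucas), and after multiplication by $x_j^{2^{k-1}}$ it yields exactly the monomial obtained from $x^a$ by swapping bits $k$ and $k-1$ in the exponents of $x_j$ and of $x_l$ simultaneously. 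That monomial has the same $\omega$-sequence as $x^a$, the same value of $\sum_i\alpha(a_i)$, and now carries a gap at level $k$ in $x_l$; applying your reduction to it at $(l,k)$ returns $x^a$ among its terms. So the rewriting cycles, and no measure depending only on $\omega$ or on the multiset of bit-counts can terminate it. You would need a genuinely new invariant, together with a proof that \emph{every} monomial produced by the step strictly decreases it; none is offered.

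For contrast, Wood's short argument in~\cite{wood-1989} avoids any monomial-by-monomial descent: it uses the identity $\chi(\sq^{t})(x_i)=x_i^{\,t+1}$ together with the $\chi$-trick to convert the problem in one stroke into an arithmetic statement about $d+n$, so that the hypothesis $\alpha(d+n)>n$ (equivalently $\beta(d)>n$) is consumed directly rather than distributed across a termination argument. If you want to rescue the inductive route you sketch, that is where the work lies; as written, the proposal does not establish the theorem.
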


Wood's theorem gives us a  sufficient condition for $QP_n^d=\{0\}$
 in terms of $d$ and $n$.  The condition $\beta(d)>n$ is also a necessary condition for $QP_n^d=\{0\}$
 since 
a monomial of the form 
 \[
 x_1^{2^{i_1}-1} \cdots x_{n}^{2^{i_n}-1}
 \]
 is not in $\mathcal{A}_{+}(P_n)$.
Furthermore, Wood's theorem is the foundation for various  results on the hit problem, for example, the computation of $\dim QP_3^d$ and so on. 
Singer's transfer homomorphism \cite{singer-1989}  relates  the hit problem to the cohomology of the mod $2$ Steenrod algebra. 
 It is the $E_2$-term of the classical  Adams spectral sequence hence the hit problem is related to 
 the stable homotopy theory. 
 Notably, Minami's new doomsday conjecture \cite{minami-1995} is 
 inspired by the Peterson conjecture and its consequences.


On the other hand, in the $21^{\rm st}$ century,  motivic cohomology theory is 
studied  in both algebraic geometry and algebraic topology. In particular, a motivic analogue
of the Adams spectral sequence and its $E_2$-term, that is, 
the cohomology of the mod $2$ motivic Steenrod algebra,  
is studied by Dugger and Isaksen \cite{dugger-isaksen-2010}. So, it is reasonable to 
think of motivic counterparts of the hit problem, Singer transfer, new doomsday conjecture and so on.


In this paper, we consider the hit problem in motivic cohomology and an analogue of the Peterson conjecture. 
To be precise, we disprove the motivic version of the Peterson conjecture. 
Our result seems to indicate a significant  difference between the classical stable homotopy theory and 
the motivic stable homotopy theory. 
We hope our result sheds some light on both classical and motivic stable homotopy theory.


For simplicity, we assume that the base field is the complex number field. Then, 
$H^{*,*}(\mathrm{Spec}(\mathbb{C}))=\mathbb{Z}/2[\tau]$, where $\deg \tau=(0,1)$. 
We refer the reader to \cite[Section 9] {voevodsky-2003} and \cite[Section 2]{yagita-2005}
for the details of the mod $2$ motivic Steenrod algebra $\mathcal{A}^{*,*}$ and the mod $2$ motivic cohomology $H^{*,*}(BV_n)$ of $BV_n$.
The mod $2$ motivic cohomology of the classifying space  of elementary abelian $2$-group $V_n$ of rank $n$  is  given by
\[
M_n=\mathbb{Z}/2[\tau, x_1, \dots, x_n, y_1, \dots, y_n]/
(x_1^2+\tau y_1, \dots, x_n^2+\tau y_n), 
\]
where
$ \deg \tau= (0,1)$, $\deg x_i=(1,1)$, $\deg y_i=(2, 1).
$
The mod $2$ motivic Steenrod algebra is generated by $Q_0$, $\wp^1, \wp^2, \wp^3, \dots$. Its action on the 
$M_n$ is given by the unstable condition
\begin{align*}
Q_0(\tau)&=0,  &\wp^a(\tau)&=0 \quad \mbox{for $a\geq 1$,} 
\\
Q_0(x_i)&=y_i, & \wp^{a}(x_i)&=0 \quad \mbox{for $a\geq 1$,} 
\\
Q_0(y_i)&=0 &  \wp^{1}(y_i)&=y_i^2, 
&
\wp^{a}(y_i)&=0 \quad \mbox{for $a\geq 2$,}
\end{align*}
and the Cartan formula
\begin{align*}
Q_0(x y)&= Q_0(x )y + x Q_0(y), 
\\
 \wp^a (xy)&= \sum_{b=0}^{a} \wp^{a-b}(x) \wp^{b}(y)+\tau  \sum_{b=0}^{a-1} 
Q_0\wp^{a-1-b} (x) Q_0\wp^{b}(y). 
\end{align*}
The mod $2$ Steenrod algebra is given as the quotient of the mod $2$ motivic Steenrod algebra, that is, 
$\mathcal{A}=\mathcal{A}^{*,*}/(\tau+1)$. 
Similarly, we have $M_n/(\tau+1)=P_n$ and  the projection 
\[
M_n \to P_n, 
\]
sending $\tau$, $x_i$, $y_i$  to $1$, $x_i$,  $x_i^2$, respectively. 
This projection  is nothing but the realization map 
\[
H^{*,*}(BV_n)\to H^{*}(BV_n).
\]
We denote  by 
$\mathcal{A}^{*,*}_+$
the set of elements $a$ in $\mathcal{A}^{*,*}$ such that the sum of the  first and second  degrees of $a$ is positive
and let 
\[
\mathcal{A}_{+}^{*,*}(M_n)=\{ a x \;|\; a \in \mathcal{A}_{+}^{*,*}, \; x \in M_n \}.
\]
Let $M_n^{d, *}$ be the subspace of $M_n$ spanned by elements of degree $(d, *)$.
We define $QM_n^{d,*}$ by
\[
QM_n^{d,*}=M_n^{d,*}/(\mathcal{A}^{*,*}_{+}(M_n)\cap M_n^{d,*}).
\]
We call the problem of finding a minimal set of generators of $M_n$ as a module over the mod $2$ motivic 
Steenrod algebra $\mathcal{A}^{*,*}$ the motivic hit problem.
It is equivalent to the problem of finding a basis for $QM_n^{d,*}$. 
A monomial  of the form
\[
x_1\cdots x_n y_1^{2^{i_1-1}-1}\cdots y_n^{2^{i_n-1}-1}
\]
 in $M_n^{d,*}$ 
 is not in $\mathcal{A}^{*,*}_+(M_n)$ . So, 
if $\beta(d)\leq n$,  $\dim QM_n^{d, *}\not= 0$. 
Thus, it is reasonable to ask the following conjecture.


\begin{conjecture}\label{conjecture:1.2}
If  $\beta(d)>n$, then
$
\dim Q M_n^{d,*}=0.
$
\end{conjecture}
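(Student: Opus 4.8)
Since Conjecture~\ref{conjecture:1.2} is, as the abstract announces, false, the plan is not to prove it but to exhibit a counterexample: a pair $(n,d)$ with $\beta(d)>n$ for which $QM_n^{d,*}\neq 0$. The obstruction to being hit that one wants to exploit is the weight grading, which has no classical analogue. The first move is to reduce the whole question to a purely combinatorial one about a degeneration of $M_n$.

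I would begin with the observation that multiplication by $\tau$ commutes with $Q_0$ and every $\wp^{a}$: one checks $Q_0(\tau^{c}z)=\tau^{c}Q_0(z)$ and $\wp^{a}(\tau^{c}z)=\tau^{c}\wp^{a}(z)$ directly from the unstable conditions and the Cartan formula, the $\tau$-corrections contributing nothing because they all involve $Q_0\wp^{?}(\tau^{c})=0$. Hence $M_n$ is a $\mathbb{Z}/2[\tau]$-linear $\mathcal{A}^{*,*}$-module and each $M_n^{d,*}$ is free of finite rank over $\mathbb{Z}/2[\tau]$ (a basis being the monomials $x_{i_1}\cdots x_{i_k}y_1^{b_1}\cdots y_n^{b_n}$, $i_1<\cdots<i_k$, of first degree $d$); so $QM_n^{d,*}$ is a finitely generated graded $\mathbb{Z}/2[\tau]$-module. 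Inverting $\tau$ identifies $M_n[\tau^{-1}]$, over $\mathcal{A}^{*,*}[\tau^{-1}]\cong\mathcal{A}\otimes\mathbb{Z}/2[\tau^{\pm1}]$ (valid for the base field $\mathbb{C}$), with $P_n\otimes\mathbb{Z}/2[\tau^{\pm1}]$, whence $QM_n^{d,*}[\tau^{-1}]\cong QP_n^{d}\otimes\mathbb{Z}/2[\tau^{\pm1}]$, which is $0$ when $\beta(d)>n$ by Theorem~\ref{theorem:1.1}. Thus $QM_n^{d,*}$ is $\tau$-torsion, and for a finitely generated torsion $\mathbb{Z}/2[\tau]$-module this gives
\[
QM_n^{d,*}\neq 0 \iff Q\bar M_n^{\,d}:=QM_n^{d,*}/\tau \neq 0,\qquad \bar M_n:=M_n/(\tau),
\]
where $\bar M_n=\Lambda(x_1,\dots,x_n)\otimes\mathbb{Z}/2[y_1,\dots,y_n]$ carries the degenerate action in which $Q_0$ is the derivation with $Q_0(x_i)=y_i$, $Q_0(y_i)=0$, while $\wp^{a}$ acts through the Cartan formula with its $\tau$-term deleted, so that $\wp^{a}$ preserves the exterior degree and acts on $\mathbb{Z}/2[y_1,\dots,y_n]$ exactly as the Steenrod squares act on a polynomial ring on one-dimensional generators.

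The remaining task is to disprove the implication $\beta(d)>n\Rightarrow Q\bar M_n^{\,d}=0$ for this explicit module. I would analyse $\bar M_n$ through the increasing filtration $G^{0}\subset\cdots\subset G^{n}=\bar M_n$ by the number of exterior generators ($G^{k}$ spanned by the monomials using at most $k$ of the $x_i$): since $\wp$ preserves and $Q_0$ lowers the exterior degree, each $G^{k}$ is an $\mathcal{A}^{*,*}$-submodule, and $G^{k}/G^{k-1}$ is $\binom{n}{k}$ copies of $\mathbb{Z}/2[y_1,\dots,y_n]\cong P_n$ with $Q_0$ acting as $0$. Right exactness of $M\mapsto M/\mathcal{A}^{*,*}_{+}M$ then presents $Q\bar M_n^{\,d}$ as an assembly of the groups $QP_n^{(d-k)/2}$ (one per $k$-element subset of $\{1,\dots,n\}$ with $k\equiv d \bmod 2$), glued together by the maps induced by $Q_0$. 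The strategy is to choose $(n,d)$ with $\beta(d)>n$ but $\beta\bigl((d-k)/2\bigr)\le n$ for some admissible $k$, so that at least one constituent $QP_n^{(d-k)/2}$ is nonzero — e.g.\ one wants $QP_n^{(d-n)/2}$ alive when $n\equiv d$ — and then to compute the $Q_0$-gluing on that constituent explicitly, using known values and the spike basis of $QP_n$ in low degrees, and verify that some class survives (most plausibly a lift $x_{i_1}\cdots x_{i_k}y_1^{2^{c_1}-1}\cdots$ of a non-hit spike of $P_n$).

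The hard part is exactly this last verification. The filtration only bounds $\dim Q\bar M_n^{\,d}$ from above, and the $Q_0$-relations are remarkably efficient at collapsing it: a direct parity-and-spike count shows that several of the smallest candidate bidegrees actually yield $Q\bar M_n^{\,d}=0$, because the relations coming from applying $Q_0$ to the top-exterior-degree monomials already span everything in sight. One therefore has to locate a bidegree in which the images of these $Q_0$-maps genuinely miss a monomial class, and prove the miss. Once a single such $(n,d)$ is in hand one concludes $Q\bar M_n^{\,d}\neq 0$, hence $QM_n^{d,*}\neq 0$ with $\beta(d)>n$, which refutes Conjecture~\ref{conjecture:1.2}.
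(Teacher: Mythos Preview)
Your reduction is sound and matches the paper: passing to $N_n:=M_n/(\tau)$, observing that $\wp^{a}$ preserves and $Q_0$ lowers the exterior degree, and recasting the question as whether some class in $\Lambda_n^{k}\otimes(Y_n/\mathcal{P}_+Y_n)$ escapes the $Q_0$-images. The $\tau$-torsion argument you give is a clean justification of the step the paper simply asserts (``it is easy to see that $QM_n^{d,*}\cong QN_n^{d,*}$'').

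The gap is that you stop exactly where the content begins. You acknowledge that ``the hard part is this last verification'' and that the exterior-degree filtration only bounds $\dim Q\bar M_n^{d}$ from above, but you neither name a pair $(n,d)$ nor a candidate monomial, and you give no mechanism for proving that the $Q_0$-relations miss it. The paper supplies all three ingredients. First, the explicit class
\[
z_k=x_1\cdots x_k\,y_1^{2^{k-1}-1}y_2^{2^{k}-2^{k-2}-1}\cdots y_k^{2^{k}-2^{0}-1}y_{k+1}^{2^{k}-1}\cdots y_n^{2^{k}-1},
\]
with $d=(n-1)(2^{k+1}-2)+k$, $k=n-3$, and the arithmetic check that $\alpha(d+n)=\alpha(n-2)+n-2>n$ once $\alpha(n-2)\ge 3$. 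Second, a filtration finer than yours: rather than just the exterior degree, the paper filters by the full $\omega$-vector $(\omega_0(z),\omega_1(z),\dots,\omega_k(z))$ and works in the top stratum $(k,n-1,\dots,n-1)$, where the classical hit-problem analysis of $(P_n/F_n)^{d_1}$ gives a concrete basis $\{\sigma(v_\ell)\}$. Third, the decisive lemma: any monomial $z$ with $\omega_0(z)=k+1$ has $\omega_1(z)\in\{n,n-2\}$, the case $\omega_1(z)=n$ reduces to $n-2$ via $Q_0Q_0=0$, and in the case $\omega_1(z)=n-2$ one computes that $\pi(Q_0 z)$ lands either in the span of $\mathcal{M}_0$ (monomials with some $\alpha_i<k$) or equals a \emph{pair} $\sigma_1(z_k)+\sigma_2(z_k)$. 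Thus no single $\sigma(z_k)$ is ever hit, and $z_k\notin\mathcal{A}^{*,*}_+(M_n)$.

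Your coarser filtration by exterior degree alone would not make this visible: the associated graded pieces $QP_n^{(d-k)/2}$ are large and the $Q_0$-gluing across them is opaque without the further $\omega$-refinement. The moral is that the counterexample lives at the very top of an $\omega$-filtration, where the $Q_0$-image has a parity obstruction (it only produces sums of two $\Sigma_n$-translates of $z_k$); identifying that stratum and that obstruction is the actual work.
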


This conjecture holds for some $n$ and 
this is the motivic  analogue of the Peterson conjecture. The purpose of this paper is to give 
counterexamples for this conjecture.


\begin{theorem} \label{theorem:1.3}
For $n$ such that $\alpha(n-2)\geq 3$, 
let $k=n-3$, $d=(n-1)(2^{k+1}-2)+k$. Then, $
\beta(d)>n$ but 
$
\dim QM_n^{d,*}\not=0.
$
\end{theorem}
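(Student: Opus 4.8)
I would prove the two assertions of the theorem separately.

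\emph{That $\beta(d)>n$.} I would first invoke the standard reformulation $\beta(d)=\min\{t\ge 0:\alpha(d+t)\le t\}$ (immediate from the definition, since at a minimizing $t$ no vanishing summands $2^{0}-1$ can occur) together with the subadditivity $\alpha(a+b)\le\alpha(a)+\alpha(b)$; these yield the equivalence $\beta(d)\le n\iff\alpha(d+n)\le n$. Substituting $k=n-3$ gives $d+n=(n-1)2^{\,n-2}-1$. Writing $n-1=2^{v}u$ with $u$ odd, the number $(n-1)2^{\,n-2}=2^{\,v+n-2}u$ has its lowest nonzero binary digit in position $v+n-2$, so subtracting $1$ produces a solid block of ones in positions $0,\dots,v+n-3$ together with a copy of $u-1$ shifted up; hence $\alpha(d+n)=(v+n-2)+\alpha(u-1)$. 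Applying the same count to $n-2=2^{v}u-1$ gives $\alpha(n-2)=v+\alpha(u-1)$, whence $\alpha(d+n)=(n-2)+\alpha(n-2)$, which is $>n$ precisely when $\alpha(n-2)\ge 3$.

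\emph{That $\dim QM_n^{d,*}\ne 0$.} The plan is a chain of $\mathcal{A}^{*,*}$-module surjections reducing the question to the classical hit problem for $P_n$. Since $Q_0$ and every $\wp^a$ kill $\tau$, and the correction term $\tau\sum Q_0\wp^{\bullet}(x)Q_0\wp^{\bullet}(y)$ in the motivic Cartan formula itself contains $\tau$, the ideal $\tau M_n$ is a bigraded $\mathcal{A}^{*,*}$-submodule; as $M\mapsto Q(M)$ is right exact, $QM_n^{d,*}$ maps onto $Q(M_n/\tau)^{d,*}$, where $M_n/\tau\cong\Lambda(x_1,\dots,x_n)\otimes R$ with $R=\mathbb{Z}/2[y_1,\dots,y_n]$. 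On $M_n/\tau$ the action is triangular in the exterior degree $|I|$ of a monomial $x_Iy^b$: $\wp^a$ fixes $|I|$ and acts only through $R$, while $Q_0(x_Iy^b)=\sum_{i\in I}x_{I\setminus i}\,y_i y^b$ strictly lowers $|I|$; hence the span $F_e$ of monomials with $|I|\le e$ is an $\mathcal{A}^{*,*}$-submodule. Because $(n-1)2^{\,n-2}$ is even, $d=(n-1)2^{\,n-2}-(n+1)\equiv n-1\pmod 2$, so the largest exterior degree occurring in bidegree $(d,*)$ is $n-1$; quotienting by $F_{n-2}$ and using right exactness again, $QM_n^{d,*}$ maps onto $QT^{d,*}$ for $T:=(M_n/\tau)/F_{n-2}$. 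In the bidegrees meeting $(d,*)$, $T$ is the sum of the $n$ copies $x_{[n]\setminus\{j\}}\cdot R$, on which $\wp^a$ acts through $R$ and $Q_0$ is induced solely by the differential $x_{[n]}\cdot r\mapsto\sum_j x_{[n]\setminus\{j\}}\,y_j r$ coming down from exterior degree $n$. Identifying the $\wp$-action on $R$ with the $\mathrm{Sq}$-action on $P_n$ (first degrees doubled, so $\mathcal{A}^{*,*}_+R$ corresponds to $\mathcal{A}_+P_n$), one obtains
\[
QT^{d,*}\ \cong\ (QP_n^{\delta})^{\oplus n}\,\big/\,\mathrm{image}(\bar\mu),\qquad
\bar\mu\colon QP_n^{\delta-1}\longrightarrow (QP_n^{\delta})^{\oplus n},\quad \bar\mu[p]=\big([z_1p],\dots,[z_np]\big),
\]
where $\delta=\tfrac12(d-n+1)=(n-1)2^{\,n-3}-n$. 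One checks $\delta+n=(n-1)2^{\,n-3}$, so $\alpha(\delta+n)=\alpha(n-1)\le n$; thus $\beta(\delta)\le n$ and $QP_n^{\delta}\ne 0$ (it contains a spike, by the easy converse to Wood's theorem). Hence it suffices to show that $\bar\mu$ is not surjective.

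\emph{Where the difficulty lies.} Non-surjectivity of $\bar\mu$ is the crux, and it is here that the hypothesis $\alpha(n-2)\ge 3$ is used. If $\alpha(n-2)\ge 4$, the same binary count applied to $(\delta-1)+n=(n-1)2^{\,n-3}-1$ gives $\alpha\big((\delta-1)+n\big)=(n-3)+\alpha(n-2)>n$, so $\beta(\delta-1)>n$ and $QP_n^{\delta-1}=0$ by Wood's theorem itself; then $\bar\mu=0$ and we are done at once. The tight case is $\alpha(n-2)=3$, where $QP_n^{\delta-1}$ may be nonzero and one must establish $\dim QP_n^{\delta-1}<n\cdot\dim QP_n^{\delta}$, so that $\bar\mu$, whose target has dimension $n\dim QP_n^{\delta}$, cannot be onto. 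I would try to obtain this either from the relations $z_jz_k=z_kz_j$, which force $\mathrm{image}(\bar\mu)$ to consist of tuples $([g_j])$ with $[z_kg_j]=[z_jg_k]$ in $QP_n^{\delta+1}$, or from a Kameko-map reduction bounding $\dim QP_n^{\delta-1}$ from above. This classical estimate is the only genuinely computational point of the argument; everything else is formal.
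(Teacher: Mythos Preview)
Your computation that $\beta(d)>n$ is correct and matches the paper's argument (its Proposition~2.1 together with the calculation $\alpha(d+n)=(n-2)+\alpha(n-2)$).

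Your approach to $\dim QM_n^{d,*}\neq 0$, however, has a genuine gap, and it is \emph{not} only in the case $\alpha(n-2)=3$ that you flag. The problem is the claimed factorisation of $\bar\mu$ through $QP_n^{\delta-1}$. Multiplication by a variable does not carry the hit ideal to itself: for instance $x_1^2=\mathrm{Sq}^1(x_1)$ is hit while $x_1\cdot x_1^2=x_1^3$ is a spike. Concretely, if $p=\mathrm{Sq}^a(q)$ then the Cartan formula gives $x_jp=\mathrm{Sq}^a(x_jq)+x_j^{2}\,\mathrm{Sq}^{a-1}(q)$, and the second term has no reason to be hit. Hence the correct description is
\[
QT^{d,*}\ \cong\ (QP_n^{\delta})^{\oplus n}\big/\operatorname{im}\mu,
\qquad
\mu\colon P_n^{\delta-1}\longrightarrow (QP_n^{\delta})^{\oplus n},
\]
with domain $P_n^{\delta-1}$, \emph{not} $QP_n^{\delta-1}$. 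So the inference ``$QP_n^{\delta-1}=0\Rightarrow\bar\mu=0$'' in your $\alpha(n-2)\ge 4$ case is invalid; that case is just as open as the case $\alpha(n-2)=3$. The symmetry constraint $[z_kg_j]=[z_jg_k]$ you mention likewise fails to be a well-defined condition on classes in $QP_n^{\delta+1}$, for the same reason. Nothing in your outline establishes that $\mu$ is not surjective, and the source $P_n^{\delta-1}$ is enormous compared with the target, so no na\"{\i}ve dimension count helps.

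By contrast, the paper does not look at the top exterior degree at all. It exhibits an explicit monomial $z_k$ of exterior degree $k=n-3$ and proves (Proposition~1.4, valid for every $1\le k<n$) that $z_k\notin\mathcal{A}^{*,*}_+(M_n)$. The argument passes to $N_n=M_n/(\tau)$, introduces a weight filtration by the sequence $(\omega_0,\omega_1,\dots)$, and develops in Section~3 an explicit basis for the relevant subquotient $(P_n/F_n)^{d_1}$ via the monomials $\sigma(v_\ell)$ with $\sigma\in\mathrm{Mono}(\ell)$. With this basis in hand, Section~5 shows that $Q_0$ applied to any monomial of exterior degree $k+1$ lands, modulo the filtration, in the span of the $\mathcal{M}_0$ monomials together with \emph{sums of pairs} $\sigma_1(z_k)+\sigma_2(z_k)$; consequently no single $\sigma(z_k)$ is hit. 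The key step is thus an explicit combinatorial control of $Q_0$ relative to a carefully chosen basis, rather than an abstract surjectivity argument.
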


For $n=9$, the assumption of Theorem~\ref{theorem:1.3} holds.  For $n=9$, we have $k=6$, $d=1014$
and $\beta(1014)=10>9$.
We prove Theorem~\ref{theorem:1.3} by giving a family of monomials not in $\mathcal{A}^{*,*}_{+}(M_n)$.
Let us denote by $z_k$ the monomial
\[
x_1\cdots x_k y_1^{2^{k-1}-1}y_2^{2^k-2^{k-2}-1} \cdots y_k^{2^{k}-2^0-1} y_{k+1}^{2^{k}-1}\cdots y_n^{2^{k}-1}
\]
of degree $d=(n-1)(2^{k+1}-2)+k$.
Then, Theorem~\ref{theorem:1.3} could be divided to the following 
Propositions~\ref{proposition:1.4} and \ref{proposition:1.5}.


\begin{proposition}\label{proposition:1.4}
Suppose that $1\leq k <n$. Then, the monomial 
$
z_k 
$
is not in $\mathcal{A}^{*,*}_{+}(M_n)$.
\end{proposition}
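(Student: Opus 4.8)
The plan is to work modulo $\tau$. Over $\mathbb{C}$ the operations $Q_0$ and $\wp^a$ are $\mathbb{Z}/2[\tau]$-linear (they kill $\tau$, and $\wp^a(\tau w)=\tau\wp^a(w)$ by the Cartan formula), so the ideal $(\tau)\subset M_n$ is an $\mathcal{A}^{*,*}$-submodule and the projection $M_n\to\bar M_n:=M_n/(\tau)$ sends $\mathcal{A}^{*,*}_+(M_n)$ onto $\mathcal{A}^{*,*}_+(\bar M_n)$. Since $z_k$ is $\tau$-free its image $\bar z_k$ is nonzero, so it suffices to prove $\bar z_k\notin\mathcal{A}^{*,*}_+(\bar M_n)$. (The realization $\tau\mapsto 1$ is useless here: when $\beta(d)>n$ it sends $z_k$ into the zero group $QP_n^d$, so one must keep the $\tau$-free data, not forget it.) In $\bar M_n=\Lambda(x_1,\dots,x_n)\otimes\mathbb{Z}/2[y_1,\dots,y_n]$ (with $\deg x_i=(1,1)$, $\deg y_i=(2,1)$) the operation $Q_0$ is the exterior derivation with $Q_0(x_i)=y_i$, and---because the $\tau$-term of the Cartan formula vanishes modulo $\tau$---each $\wp^a$ annihilates the $x_i$ and acts on the polynomial factor exactly as $\mathrm{Sq}^a$ does on a polynomial algebra on degree-one generators, i.e.\ $\wp^a(y_i^c)=\binom{c}{a}y_i^{c+a}$ with untwisted Cartan.

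I would then dualize. A monomial $x_S\prod y_i^{c_i}$ of $\bar M_n$ has bidegree $(|S|+2\sum c_i,\,|S|+\sum c_i)$, so the bidegree of $z_k$ forces $|S|=k$ and $\sum c_i=e:=(n-1)(2^k-1)$; hence $\bar M_n^{d,*}=\bigoplus_{|S|=k}x_S\otimes\tilde P_n^{\,e}$, where $\tilde P_n=\mathbb{Z}/2[y_1,\dots,y_n]$ with $\deg y_i=1$. Each $\wp^a$ preserves the summands, while $Q_0\colon\bar M_n^{d-1,*}=\bigoplus_{|S'|=k+1}x_{S'}\otimes\tilde P_n^{\,e-1}\to\bar M_n^{d,*}$ is $x_{S'}\mu\mapsto\sum_{i\in S'}x_{S'\setminus i}\,y_i\mu$. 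Since $\mathcal{A}^{*,*}_+(\bar M_n)=\operatorname{im}Q_0+\sum_{a\ge1}\operatorname{im}\wp^a$, passing to functionals shows that $\bar z_k\notin\mathcal{A}^{*,*}_+(\bar M_n)$ is equivalent to the existence of a family $\phi_S\in(\tilde P_n^{\,e})^*$, indexed by the $k$-subsets $S\subseteq\{1,\dots,n\}$, with: (a) each $\phi_S$ annihilating the hit elements of $\tilde P_n$, i.e.\ representing a class in $(Q\tilde P_n^{\,e})^*\cong(QP_n^{\,e})^*$; (b) $\sum_{i\in S'}\phi_{S'\setminus i}(y_i\mu)=0$ for every $(k+1)$-subset $S'$ and every $\mu\in\tilde P_n^{\,e-1}$; and (c) $\phi_{S_0}(\mu_0)=1$, where $S_0=\{1,\dots,k\}$ and $\mu_0=y_1^{2^{k-1}-1}\prod_{2\le j\le k}y_j^{\,2^k-2^{k-j}-1}\prod_{k<j\le n}y_j^{\,2^k-1}$ is the polynomial part of $\bar z_k$. (Incidentally $(\bar M_n,Q_0)$ is the Koszul complex on the regular sequence $y_1,\dots,y_n$, so $\ker Q_0=\operatorname{im}Q_0$ in positive degrees; this identifies the subspace (b) must kill and helps with solvability.)

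To build the family, note that in binary the exponents of $\mu_0$ consist of $n-k$ full spikes $y_j^{2^k-1}$ together with $k$ near-spikes $y_j^{(2^k-1)-2^{k-j}}$ ($1\le j\le k$), each obtained from a full spike by clearing a distinct one of the $k$ lowest bits; thus $\mu_0$ is a natural non-hit candidate in $\tilde P_n^{\,e}$, and the honest spikes (whose dual monomials are primitive, by the standard one-variable computation) supply the ambient Steenrod-closed functionals. Condition (b) is a cocycle condition for a $k$-cochain on the full simplex on $\{1,\dots,n\}$, which is contractible, so such cochains are coboundaries; this is the freedom one uses to adjust the $\phi_S$ so that (c) holds while (a) and (b) are preserved---concretely, one takes $\phi_S$ to be a fixed Steenrod-closed functional detecting $\mu_0$, twisted by the simplicial coefficients, and arranges the coboundary to have the prescribed value on $S_0$.

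The main obstacle is exactly this construction: producing a Steenrod-closed functional on $\tilde P_n^{\,e}$ that is nonzero on the non-spike monomial $\mu_0$ and is coherent with the simplicial bookkeeping in (b). This is a delicate (though elementary) computation in the classical hit problem for $\mathbb{Z}/2[y_1,\dots,y_n]$ in degree $e=(n-1)(2^k-1)$---one must control the multiplication-by-$y_i$ maps $\tilde P_n^{\,e-1}\to\tilde P_n^{\,e}$ at the level of primitives---together with the combinatorial coherence across all $k$-subsets $S$, and the staircase of exponents chosen in $z_k$ is what makes it come out right. It should be stressed that $\beta\big((n-1)(2^k-1)\big)\le n-1<n$, so Wood's theorem (Theorem~\ref{theorem:1.1}) places no obstruction on $\mu_0$ being non-hit in $\tilde P_n$: passing to $\bar M_n$ has shifted the relevant classical hit problem out of the range where Wood's counting forces vanishing, and that is precisely why $\dim QM_n^{d,*}$ can be nonzero while $\dim QP_n^d=0$.
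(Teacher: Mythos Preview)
Your reduction modulo $\tau$ to $\bar M_n=\Lambda(x_i)\otimes\mathbb{Z}/2[y_i]$ and your reformulation of the problem as the existence of a family $(\phi_S)_{|S|=k}$ satisfying (a), (b), (c) are correct, and this is exactly the dual of the framework the paper sets up in Section~4. The gap is that you do not construct the $\phi_S$: you name ``the main obstacle'' and then wave at it.

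The paragraph invoking contractibility of the simplex does not fill the gap and is in fact misleading. Condition~(b) is $Q_0^*$-closedness of $\Phi=(\phi_S)$ in the dual Koszul complex; Koszul exactness does say every such $\Phi$ is $Q_0^*$-exact, but that is a \emph{constraint}, not ``freedom to adjust''. Your concrete suggestion---take each $\phi_S$ equal to a single Steenrod-closed functional $\phi$ detecting $\mu_0$ (over $\mathbb{Z}/2$ the ``simplicial coefficients'' are all $1$)---forces $\phi\bigl((\sum_{i\in S'}y_i)\mu\bigr)=0$ for every $(k{+}1)$-subset $S'$ and every $\mu\in\tilde P_n^{\,e-1}$, a condition you neither verify nor have any mechanism to verify. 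Nothing in your outline controls the multiplication maps $\mu\mapsto y_i\mu$ at the level of $(QP_n^e)^*$, and that control is the entire content of the proof.

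The paper supplies it by a primal argument with an extra filtration. It passes to the quotient $\Lambda_n^k\otimes(Y_n/G_n)^{2d_1}$, where $G_n$ contains not only the $\mathcal P$-hit elements but also all monomials with $(\omega_1,\dots,\omega_k)<(n-1,\dots,n-1)$; via $\psi$ this is $\Lambda_n^k\otimes(P_n/F_n)^{d_1}$, for which Section~3 produces an explicit basis $\{\pi(\sigma(v_\ell))\}$. The resulting monomial basis of $\Lambda_n^k\otimes(Y_n/G_n)^{2d_1}$ splits as $\mathcal M_0\cup\mathcal M_1$ according to whether some $\alpha_i<k$ or all $\alpha_i=k$, with $\mathcal M_1=\{\sigma(z_k):\sigma\in\mathrm{Mono}(k)\}$. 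The heart of the proof (Propositions~5.1--5.3) is a case analysis on $\omega_1(z)\in\{n,\,n-2,\,<n-2\}$ showing that for every monomial $z$ of degree $d-1$ with $\omega_0(z)=k+1$, the class $\pi(Q_0 z)$ lies in $\mathrm{span}(\mathcal M_0)+\mathrm{span}\{\sigma_1(z_k)+\sigma_2(z_k)\}$. Since $\wp^a$ already lands in $G_n$, the functional ``sum of the $\mathcal M_1$-coordinates'' is then a $\Phi$ satisfying your (a), (b), (c). In short, the $\omega$-filtration and the invariant $\alpha_i$ are the missing ideas; the staircase shape of $z_k$ matters precisely because it makes $z_k$ the unique element of $\mathcal M_1$ with $\sigma_1=\sigma_2=\mathrm{id}$, and there is no bypass through simplicial topology.
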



\begin{proposition}\label{proposition:1.5}
Suppose that $\alpha(n-2)\geq 3$ and $k=n-3$. Then, we have
$
\beta(d)>n.
$
\end{proposition}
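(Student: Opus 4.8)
The plan is to translate $\beta(d)>n$ into an inequality for the binary digit‑sum function $\alpha$ and then verify that inequality by writing out the binary expansion of $d+s$ explicitly. First I would record the elementary dictionary between $\beta$ and $\alpha$: for $s\ge 1$, a decomposition $d=\sum_{j=1}^{s}(2^{i_j}-1)$ with all $i_j\ge 1$ is the same as a decomposition of $d+s$ into $s$ powers of $2$, each of size at least $2$. Such a decomposition exists precisely when $d+s$ is even and $\alpha(d+s)\le s\le\tfrac12(d+s)$: take the binary expansion of $d+s$, which has $\alpha(d+s)$ terms, all $\ge 2$ since $d+s$ is even, and split $2^{b}=2^{b-1}+2^{b-1}$ for $b\ge 2$ to realise every larger number of terms up to $\tfrac12(d+s)$. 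In particular, the existence of such a decomposition forces $\alpha(d+s)\le s$. Hence it is enough to prove
\[
\alpha(d+s)>s\qquad\text{for all }s\text{ with }1\le s\le n .
\]

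Next I would put $d$ into a usable form. Substituting $k=n-3$ gives $d=(n-1)(2^{k+1}-2)+k=(n-1)2^{\,n-2}-(n+1)$. For $1\le s\le n$ set $t=n+1-s$, so $1\le t\le n$ and $d+s=(n-1)2^{\,n-2}-t$. The hypothesis $\alpha(n-2)\ge 3$ already forces $n\ge 9$, and hence $t\le n<2^{\,n-2}$; therefore
\[
d+s=(n-2)2^{\,n-2}+(2^{\,n-2}-t),
\]
where the two summands use disjoint blocks of binary digits (positions $\ge n-2$ and positions $\le n-3$), so $\alpha(d+s)=\alpha(n-2)+\alpha(2^{\,n-2}-t)$. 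To evaluate the last term, write $t=2^{c}t'$ with $t'$ odd; then $2^{\,n-2}-t=2^{c}\bigl(2^{\,n-2-c}-t'\bigr)$ and $2^{\,n-2-c}-t'=(2^{\,n-2-c}-1)-(t'-1)$ is the $(n-2-c)$-digit bitwise complement of $t'-1$, so $\alpha(2^{\,n-2}-t)=(n-2-c)-\alpha(t'-1)$.

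Combining these, the target inequality $\alpha(d+s)>s=n+1-t$ becomes
\[
\alpha(n-2)+t-c-\alpha(t'-1)\ge 4 .
\]
Since $\alpha(n-2)\ge 3$, it suffices to prove $t-c-\alpha(t'-1)\ge 1$. From $(2^{c}-1)(t'-1)\ge 0$ we get $t=2^{c}t'\ge 2^{c}+t'-1$, and combining this with $2^{c}\ge c+1$ and $t'-1\ge\alpha(t'-1)$ yields $t-c-\alpha(t'-1)\ge(2^{c}-c)+\bigl(t'-1-\alpha(t'-1)\bigr)\ge 1$, as required.

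The genuinely delicate points are the two assertions about the binary expansion of $d+s$: the disjointness of the digit blocks of $(n-2)2^{\,n-2}$ and of $2^{\,n-2}-t$ (this is the first place where $n\ge 9$, i.e.\ $\alpha(n-2)\ge 3$, is needed for positional reasons), and the complement identity $\alpha(2^{\,n-2}-t)=(n-2-c)-\alpha(t'-1)$, which needs a little care about the ranges of $t$, $c$, $t'$. After that the final inequality is routine; one should note, however, that the slack is tight, since if $\alpha(n-2)=2$ the required bound would be $t-c-\alpha(t'-1)\ge 2$, which already fails at $t=1$ (the case $s=n$), so this argument genuinely uses $\alpha(n-2)\ge 3$.
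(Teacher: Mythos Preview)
Your proof is correct, but it takes a longer route than the paper. The paper first proves the general equivalence $\beta(d)>n\Leftrightarrow\alpha(d+n)>n$ (Proposition~2.1), and then simply computes, for $k=n-3$,
\[
d+n=(n-1)\,2^{\,n-2}-1=(n-2)\,2^{\,n-2}+2^{\,n-3}+\cdots+2^{0},
\]
so that $\alpha(d+n)=\alpha(n-2)+(n-2)\ge n+1$. This is exactly your computation at the single value $s=n$ (where $t=1$, $c=0$, $t'=1$, $\alpha(t'-1)=0$). The difference is that you verify $\alpha(d+s)>s$ for \emph{every} $1\le s\le n$, which allows you to bypass the non-trivial direction of Proposition~2.1 (namely $\alpha(d+n)\le n\Rightarrow\beta(d)\le n$) at the cost of considerably more arithmetic; the paper invests in that equivalence once and then needs only the single case $s=n$. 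One minor point: the bound $t\le n<2^{\,n-2}$ that you need for the disjoint-block decomposition already holds for all $n\ge 5$, so the hypothesis $\alpha(n-2)\ge 3$ is not actually used ``for positional reasons'' there---it enters only at the final step $\alpha(n-2)+t-c-\alpha(t'-1)\ge 4$, exactly as your closing remark about tightness indicates.
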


This paper is organized as follows. In Section~\ref{section:2}, we prove Proposition~\ref{proposition:1.5}.
In Section~\ref{section:3}, we recall some results on the classical hit problem.
In Section~\ref{section:4}, we 
give the details of  the motivic hit problem.
In Section~\ref{section:5}, we prove Proposition~\ref{proposition:1.4}.

The author would like to thank Nobuaki Yagita and the referee for comments and suggestions
for improving the exposition of this paper.


\section{Proof of Proposition~\ref{proposition:1.5}}\label{section:2}

In  this section, we prove Proposition~\ref{proposition:1.5}. First, we prove Proposition~\ref{proposition:2.1} below.


\begin{proposition} \label{proposition:2.1}
Let $d$, $n$ be positive integers.
Then, $\alpha(d+n)>n$ if and only if  $\beta(d)>n$, where $\alpha(d+n)$ is the number of $1$'s in the binary expansion of $d+n$.
\end{proposition}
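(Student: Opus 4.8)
The plan is to relate the function $\beta(d)$ to the binary digit sum $\alpha$ by exploiting the identity $2^i - 1 = \underbrace{1\cdots1}_{i}$ in binary, i.e. a string of $i$ ones. So writing $d = \sum_{j=1}^s (2^{i_j}-1)$ with $s = \beta(d)$ is a way of covering $d$ by $s$ "repunit" blocks, and the number of carries that occur when adding these $s$ numbers together controls how $\alpha$ behaves. I would first record the standard Kummer/Legendre-type fact that $\alpha(a+b) = \alpha(a) + \alpha(b) - (\text{number of carries in adding } a+b \text{ in base } 2)$, and more generally $\alpha\bigl(\sum_j a_j\bigr) = \sum_j \alpha(a_j) - (\text{total carries})$. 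Since $\alpha(2^{i}-1) = i$, if $d = \sum_{j=1}^s(2^{i_j}-1)$ then $d + s = \sum_{j=1}^s 2^{i_j}$, so $\alpha(d+s) \le s$ always, with $\alpha(d+s) = s$ iff the $i_j$ are distinct. This already gives one direction in a clean form; the remaining work is to pass between $\alpha(d+s)$ for the extremal $s=\beta(d)$ and $\alpha(d+n)$ for a general $n$.

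The key lemma I would isolate is: for every positive integer $m$, $\beta(d) \le m$ if and only if $\alpha(d+m) \le m$. Granting this, Proposition~\ref{proposition:2.1} is immediate by contraposition with $m = n$: $\beta(d) > n \iff \neg(\beta(d)\le n) \iff \neg(\alpha(d+n)\le n) \iff \alpha(d+n) > n$. To prove the lemma, the forward direction is the computation above: if $\beta(d)\le m$, take a representation $d = \sum_{j=1}^{\beta(d)}(2^{i_j}-1)$ and pad it to length $m$ by appending $m - \beta(d)$ copies of $2^1-1 = 1$; then $d + m = \sum_{j=1}^m 2^{i_j}$ (now with repetitions allowed), and since each term is a power of two, $\alpha\bigl(\sum_{j=1}^m 2^{i_j}\bigr) \le m$ — indeed adding $m$ powers of two can only decrease the digit sum relative to the naive count $m$. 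For the reverse direction, suppose $\alpha(d+m) \le m$; write $d + m$ in binary as $\sum_{r} 2^{e_r}$ with exactly $\alpha(d+m) \le m$ terms, and I want to "split" these into exactly $m$ powers of two so that subtracting $1$ from each recovers a $\beta$-representation of $d$ of length $m$. The move is: repeatedly replace one term $2^{e}$ with two terms $2^{e-1} + 2^{e-1}$ (legal as long as $e \ge 1$), which increases the count by one while preserving the sum. This works to reach exactly $m$ terms provided I never get stuck with all terms equal to $2^0 = 1$ before reaching count $m$ — and that failure would force $d + m \le m$, i.e. $d \le 0$, contradicting $d$ positive (one should check the boundary case carefully, but $d\ge 1$ and $m\ge 1$ makes $d+m \ge 2$, giving room). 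Once we have $d + m = \sum_{j=1}^m 2^{i_j}$ with all $i_j \ge 1$, subtract $1$ from each term: $d = \sum_{j=1}^m (2^{i_j} - 1)$, so $\beta(d) \le m$.

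The main obstacle is the reverse direction of the lemma, specifically making the splitting argument airtight: I need to guarantee that I can always reach a representation with exactly $m$ summands of the form $2^{i}$ with each $i \ge 1$, never being forced to stop early because every available power is already $2^0$. The clean way to handle this is to track the quantity $\sum_j (2^{i_j} - 1) = d$, which is invariant under the split move, together with the count; since $d \ge 1$, at least one exponent must stay $\ge 1$ at every stage, and one argues that as long as the current count is below $m$ there is always a term with exponent $\ge 1$ to split (if the count is $c < m$ and all $c$ exponents were $0$ then $d = 0$). I would also double check the edge behavior of $\alpha$ and $\beta$ on small inputs (e.g. $d = 2^i - 1$ exactly, where $\beta(d) = 1$ and $\alpha(d+1) = 1$) to make sure the inequalities are not off by one. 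With the lemma in hand the proposition itself is a one-line contrapositive.
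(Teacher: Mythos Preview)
Your argument is correct and follows the same contrapositive strategy as the paper: both establish $\beta(d)\le m \iff \alpha(d+m)\le m$ via the identity $d+m=\sum_{j=1}^m 2^{i_j}$ whenever $d=\sum_{j=1}^m(2^{i_j}-1)$. For the reverse implication the paper takes the \emph{minimal} $s$ with $\alpha(d+s)\le s$ and uses minimality at $s-1$ to force the smallest exponent $i_r>s-r$, then splits $2^{i_r}$ into $s-r+1$ powers in one stroke; your iterated move $2^e\mapsto 2^{e-1}+2^{e-1}$ reaches the same conclusion more directly.

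Two small slips to correct. In the forward direction the padding terms must be $2^0-1=0$, not $2^1-1=1$; otherwise appending them changes the value of $d$. (With $i_j=0$ your displayed identity $d+m=\sum_{j=1}^m 2^{i_j}$ is then exactly right.) In the reverse direction you only need all $i_j\ge 0$ at the end, not $i_j\ge 1$ --- which is fortunate, since your splitting process does not guarantee the latter (a $2^0$ term already present in the binary expansion of $d+m$ is never touched), but the definition of $\beta$ allows $i_j=0$, so the conclusion $\beta(d)\le m$ still follows.
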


\begin{proof}
We prove that $\alpha(d+n)\leq n$ if and only if $\beta(d)\leq n$.
Recall that 
\[
\alpha(t)=\min \{ \; s \;|\; t=2^{i_1}+\cdots + 2^{i_s}, \;  i_1\geq i_2\geq \cdots \geq i_s \geq 0\;   \}.
\]
Therefore, we have  $\alpha(s)\leq s$.

Suppose that $\beta(d)\leq n$. Then, we have
\[
d=(2^{i_1}-1)+\cdots +(2^{i_n}-1)
\]
for some $ i_1\geq \cdots \geq i_n\geq 0$.
Hence, we have
\[
d+n=2^{i_1}+\cdots +2^{i_n}.
\]
Therefore, we have $\alpha(d+n)\leq n$.

Suppose that $\alpha(d+n)\leq n$.
Let $s$ be the least positive integer such that 
\[
\alpha(d+s)\leq s.
\]
By the definition of $s$, we have $s\leq n$.
Let $r=\alpha(d+s)$. Then, again, by the definition of $s$, $r\leq s$
and
\[
d+s=2^{i_1}+\cdots + 2^{i_r}
\]
for some $ i_1>\cdots > i_r \geq 0$. 
So, 
\[
d+s-1=2^{i_1}+\cdots + 2^{i_r}-1=2^{i_1}+\cdots + 2^{i_{r-1}} + 2^{i_r-1}+\cdots +2^0
\]
and by the definition of $s$, we have
$\alpha(d+s-1)=r-1+i_r>s-1$. Therefore, we have $i_r>s-r\geq 0$. 
Let $j_a=i_a$ for $a\leq r-1$, $j_a=i_{r}-1-a+r$ for $r\leq a \leq s-1$ and $j_a=i_r-s+r$ for $a=s$.
Then, 
\[
d+s=2^{j_1}+\cdots+ 2^{j_s}.
\]
Hence, we have
\[
d=(2^{j_1}-1)+\cdots+(2^{j_s}-1).
\]
Therefore, we have $\beta(d)\leq s \leq n$. 
\end{proof}


\begin{remark}\label{remark:2.2}
Suppose that $d=(n-1)(2^{k+1}-2)+k$. Then, we have
\[
d+n=(n-1)\cdot 2^{k+1}+2-n+k.
\]
For $k=n-1$, we have
\[
d+n=(n-1)\cdot 2^{n}+1.
\]
Hence, we have 
\[
\alpha(d+n)=\alpha(n-1)+1\leq n.
\]
Similarly, for $k=n-2$, we have
\[
d+n=(n-1) \cdot 2^{n-1}
\]
and 
\[
\alpha(d+n)=\alpha(n-1)\leq n.
\]
Therefore, $\beta(d)\leq n$ for $k=n-1, n-2$. 
\end{remark}

Now, we prove Proposition~\ref{proposition:1.5}.


\begin{proof}[Proof of Proposition~\ref{proposition:1.5}]
Let $d=(n-1)(2^{k+1}-2)+k$ and $k=n-3$. Then, 
\[
d+n=(n-1)\cdot 2^{n-2}-1=(n-2)\cdot 2^{n-2}+ 2^{n-3}+\cdots +2^0.
\]
Hence, we have
\[
\alpha(d+n)=\alpha(n-2)+ n-2.
\]
Since we assumed that $\alpha(n-2)\geq 3$, we have 
\[
\alpha(d+n)>n.
\]
Thus, by Proposition~\ref{proposition:2.1}, we have $\beta(d)>n$.
\end{proof}

\section{The classical hit problem}\label{section:3}

In this section, we recall some results on the classical hit problem. 

For a monomial 
\[
v=x_1^{e_1}\cdots x_n^{e_n}
\]
in $P_n$, 
let us define  $\alpha_{ij}(v)$ in $\{0,1\}$ by
\[
e_i=\sum_{j=0}^\infty \alpha_{ij}(v) 2^j.
\]
We define non-negative integers $\alpha_i(v)$, $\omega_j(v)$ by 
\begin{align*}
\alpha_i(v)&= \sum_{j=0}^{\infty} \alpha_{ij}(v), 
\\
\omega_j(v)&=\sum_{i=1}^n \alpha_{ij}(v).
\end{align*}
For  finite sequences of non-negative integers  of the same length $c$, say
\[
\gamma=(\gamma_1, \dots, \gamma_c), \quad \mbox{and} \quad \delta=(\delta_1, \dots, \delta_c),
\]
we consider the lexicographic order from the left, that is, 
we say 
\[
\gamma<\delta
\]
if and only if there exists $a\geq 1$ such that $\gamma_b=\delta_b$ for $b<a$ and $\gamma_a<\delta_a$.

Let $F_n$ be the subspace of $P_n$ spanned by $\mathcal{A}_{+}(P_n)$ and monomials $v$ in $P_n$  such that 
\[
(\omega_0(v), \dots, \omega_{k-1}(v))<(n-1, \dots, n-1).
\]
We denote by $F_{n-1}$ the subspace of $P_{n-1}$ spanned by $\mathcal{A}_{+}(P_{n-1})$ and 
monomials $v$ in $P_{n-1}$ such that 
\[
(\omega_0(v), \dots, \omega_{k-1}(v))<(n-1, \dots, n-1).
\]
It is clear that $F_n$, $F_{n-1}$ are closed under the action of Steenrod squares since, 
for each monomial $v$ in $P_n^{d}$,
\[
\mathrm{Sq}^a v 
\]
is a linear combination of monomials $w$ such that 
\[
(\omega_0(w), \omega_1(w),\dots ) < (\omega_0(v), \omega_1(v),\dots ).
\]
We denote the projections by the same symbol $\pi\colon P_n \to P_n/F_n$, $\pi\colon P_{n-1}\to P_{n-1}/F_{n-1}$.

Throughout  the rest of this section,  let $k$, $n$ be fixed positive integers
and $d_1=(n-1)(2^k-1)$. 
We consider the quotient spaces
\[
(P_n/F_n)^{d_1}=P_n^{d_1}/(F_n\cap P_n^{d_1}) 
\]
and 
\[
(P_{n-1}/F_{n-1})^{d_1}=P_{n-1}^{d_1}/(F_{n-1}\cap P_{n-1}^{d_1}).
\]
A monomial $v$ in $P_{n-1}^{d_1}$ is $x_1^{2^{k-1}}\cdots x_{n-1}^{2^{k}-1}$ or $\omega(v)<(n-1, \dots, n-1)$, that is, $v \in F_{n-1}$.
So, 
it is clear that $(P_{n-1}/F_{n-1})^{d_1}=\mathbb{Z}/2$ and spanned 
by the single element 
\[
x_1^{2^k-1}\cdots x_{n-1}^{2^k-1}=\pi(x_1^{2^k-1}\cdots x_{n-1}^{2^k-1}).
\]
To describe a basis for the vector space $(P_n/F_n)^{d_1}$, we need the following definitions.
For $1\leq \ell \leq n$, let $\mathrm{Mono}(\ell)$ is the set of monotone increasing functions 
\[
\{ 1, \dots, \ell \} \to \{ 1, \dots, n\}.
\]
We identify $\sigma \in \mathrm{Mono}(\ell)$ with the permutation $\sigma$ on $\{1, \dots, n\}$
such that 
\[
\sigma(1)<\cdots < \sigma(\ell), \quad \sigma(\ell+1)<\cdots <\sigma(n).
\]
Then, the permutation $\sigma$ of $\{1, \dots, n\}$ acts on $P_n$ in the obvious manner, that is, 
\[
\sigma(x_1^{e_1}\cdots x_n^{e_n})=x_{\sigma(1)}^{e_1}\cdots x_{\sigma(n)}^{e_n}.
\]
For an integer $\ell$ such that $1\leq \ell\leq \min\{ k, n\}$, let us define the monomial $v_\ell$ in $P_n^{d_1}$ by 
\[
v_\ell=x_1^{2^{\ell-1}-1} x_2^{2^k-2^{\ell-2}-1} \cdots x_{\ell}^{2^{k}-2^{0}-1} x_{\ell+1}^{2^k-1}\cdots x_n^{2^k-1}.
\]
First, we prove that the set of monomials 
\[
\{ \pi(\sigma(v_\ell)) \; |\; 1\leq \ell\leq \min\{ k, n\}, \sigma \in \mathrm{Mono}(\ell)\}
\]
 spans the vectors space  $(P_n/F_n)^{d_1}$. To this end, we prove the following Propositions~\ref{proposition:3.1} and \ref{proposition:3.2}.


\begin{proposition}\label{proposition:3.1}
Let $v$ be a monomial in $P_n^{d_1}$. Then, 
\[
(\omega_0(v), \dots, \omega_{k-1}(v))\leq (n-1, \dots, n-1).
\]
\end{proposition}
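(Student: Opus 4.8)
The plan is to argue directly with the column sums $\omega_j(v)$ and obtain a contradiction from a short $2$-adic computation. First I would record the two elementary facts I need. For a monomial $v=x_1^{e_1}\cdots x_n^{e_n}$ in $P_n$, each $\alpha_{ij}(v)$ lies in $\{0,1\}$, so $0\le \omega_j(v)\le n$ for every $j$; and
\[
\sum_{j=0}^{\infty}\omega_j(v)\,2^{j}=\sum_{i=1}^{n}\sum_{j=0}^{\infty}\alpha_{ij}(v)\,2^{j}=\sum_{i=1}^{n}e_i=\deg v .
\]
In particular, if $v\in P_n^{d_1}$ then $\sum_{j\ge 0}\omega_j(v)\,2^{j}=d_1=(n-1)(2^{k}-1)$.

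Next I would argue by contradiction. Suppose $(\omega_0(v),\dots,\omega_{k-1}(v))>(n-1,\dots,n-1)$ in the lexicographic order, and let $a$ be the first index at which the two sequences differ. Then $0\le a\le k-1$, we have $\omega_j(v)=n-1$ for $0\le j<a$, and $\omega_a(v)>n-1$, which together with $\omega_a(v)\le n$ forces $\omega_a(v)=n$. Computing the "low part" of the degree gives
\[
\sum_{j=0}^{a}\omega_j(v)\,2^{j}=(n-1)(2^{a}-1)+n\cdot 2^{a}=(2n-1)2^{a}-(n-1).
\]

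The one step that takes any thought is the $2$-adic bookkeeping, and the point is simply to reduce everything modulo $2^{a+1}$ (available because $a+1\le k$). On the one hand $d_1=(n-1)2^{k}-(n-1)\equiv -(n-1)\pmod{2^{a+1}}$; on the other hand $2n-1$ is odd, so $(2n-1)2^{a}\equiv 2^{a}\pmod{2^{a+1}}$ and hence $\sum_{j=0}^{a}\omega_j(v)\,2^{j}\equiv 2^{a}-(n-1)\pmod{2^{a+1}}$. Subtracting,
\[
\sum_{j>a}\omega_j(v)\,2^{j}=d_1-\sum_{j=0}^{a}\omega_j(v)\,2^{j}\equiv -(n-1)-\bigl(2^{a}-(n-1)\bigr)=-2^{a}\pmod{2^{a+1}}.
\]
But every term of $\sum_{j>a}\omega_j(v)\,2^{j}$ is divisible by $2^{a+1}$, so this sum is $\equiv 0\pmod{2^{a+1}}$, forcing $2^{a+1}\mid 2^{a}$, which is absurd. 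Therefore no such index $a$ exists, i.e.\ $(\omega_0(v),\dots,\omega_{k-1}(v))\le(n-1,\dots,n-1)$. I expect the whole proof to be this brief; there is no real obstacle beyond being careful that $a\le k-1$ (so that $2^{a+1}$ divides $2^{k}$), which is precisely what makes the reduction of $d_1$ modulo $2^{a+1}$ legitimate.
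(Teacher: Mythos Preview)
Your proof is correct and follows essentially the same route as the paper: assume for contradiction that the sequence first exceeds $n-1$ at some index $a\le k-1$ (forcing $\omega_a(v)=n$), and derive a $2$-adic divisibility contradiction from $d_1=(n-1)(2^k-1)$. The only cosmetic difference is that the paper subtracts $(n-1)(2^{a+1}-1)$ from both expressions for $d_1$ and compares divisibility by $2^{a+1}$, whereas you reduce everything modulo $2^{a+1}$; the underlying computation is identical.
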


\begin{proof}
Proof by contradiction.
Suppose that 
\[
(\omega_0(v), \dots, \omega_{\ell-1}(v))=(n-1, \dots, n-1)
\]
and 
\[
\omega_\ell(v)=n,
\]
for $1\leq \ell < k$.
Then, on the one hand, since $d_1$ could be written as 
\[
\sum_{j=0}^{\infty} \omega_j(v) 2^{j-1}=(n-1)(2^{\ell}-1)+ n 2^{\ell}+\sum_{j=\ell+1}^{\infty} \omega_j(v) 2^{j-1}, 
\]
we have
\[
d_1-(n-1)(2^{\ell+1}-1)= 2^{\ell}+\omega_{\ell+1}(v) 2^{\ell+1}+\cdots.
\]
It is not divisible by $2^{\ell+1}$.
On the other hand, since $d_1=(n-1)(2^k-1)$, 
we have
\[
d_1-(n-1)(2^{\ell+1}-1)=(n-1)(2^k-2^{\ell+1}).
\]
It is divisible by $2^{\ell+1}$. 
 It is a contradiction. 
\end{proof}


\begin{proposition}\label{proposition:3.2}
For a  monomial $v$ in $P_n^{d_1}$  such that 
\[
(\omega_0(v), \dots, \omega_{k-1}(v))=(n-1,\dots, n-1),
\]
there exists a unique  pair  $(\ell, \sigma)$ such that $1\leq \ell\leq \min\{ k, n\}$, $\sigma\in \mathrm{Mono}(\ell)$
and 
\[
v\equiv \sigma(v_\ell)\mod F_n.
\]
\end{proposition}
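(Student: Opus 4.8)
The first step is to describe the monomials $v$ of full weight, that is, those with $(\omega_0(v),\dots,\omega_{k-1}(v))=(n-1,\dots,n-1)$. For such a $v$, from $\sum_{j\ge 0}\omega_j(v)2^j=\sum_i e_i=d_1=\sum_{j=0}^{k-1}(n-1)2^j$ one forces $\omega_j(v)=0$ for all $j\ge k$; so every exponent of $v$ is at most $2^k-1$, and for each $j<k$ there is a unique index, $\phi(j)$, with $\alpha_{\phi(j),j}(v)=0$. Thus $v$ is encoded faithfully by the function $\phi\colon\{0,\dots,k-1\}\to\{1,\dots,n\}$ via $e_i=(2^k-1)-\sum_{j\in\phi^{-1}(i)}2^j$, and conversely every such $\phi$ yields a full-weight monomial of degree $d_1$. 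A short computation identifies the function attached to $\sigma(v_\ell)$ as $\phi(j)=\sigma(\ell-j)$ for $0\le j\le\ell-1$ and $\phi(j)=\sigma(1)$ for $\ell-1\le j\le k-1$; equivalently, the $\phi$ occurring for the monomials $\sigma(v_\ell)$ are exactly those depending only on their image $S=\mathrm{Im}\,\phi$ (with $1\le|S|\le\min\{k,n\}$) — the weakly decreasing surjection onto $S$ that is injective off $\min S$ — and $S$ recovers the pair $(\ell,\sigma)$. So Proposition~\ref{proposition:3.2} splits into: (a) every full-weight $v$ is congruent modulo $F_n$ to the canonical monomial with image $\mathrm{Im}\,\phi$; and (b) canonical monomials with distinct images are incongruent modulo $F_n$.

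For (a), the plan is to exhibit, for each non-canonical $\phi$, an explicit element of $\mathcal{A}_{+}(P_n)$ of degree $d_1$ which, modulo $F_n$, equals $v_\phi+v_{\phi'}$, where $\phi'$ is obtained from $\phi$ by transposing two adjacent values or by collapsing a repeated value onto $\min S$. Such an element comes from applying a single Steenrod square $\mathrm{Sq}^r$ to a monomial of degree $d_1-r$ and discarding, by Proposition~\ref{proposition:3.1}, the resulting terms of non-full weight: one uses that for $s\ge 1$ the class $\mathrm{Sq}^s(x_i^{2^k-1})$ is either $0$ or a monomial of exponent $\ge 2^k$, so any term that spends a positive amount on a variable sitting at the exponent $2^k-1$ has a bit in position $\ge k$ and hence, being of degree $d_1$, has non-full weight. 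One then checks that such moves connect any $\phi$ to the canonical function with the same image — first sort the values into decreasing order, then push all repetitions onto $\min S$. Together with Proposition~\ref{proposition:3.1}, this already shows that $\{\pi(\sigma(v_\ell))\}$ spans $(P_n/F_n)^{d_1}$.

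For (b), I would introduce, for each subset $S\subseteq\{1,\dots,n\}$ with $1\le|S|\le\min\{k,n\}$, the linear map $\mu_S\colon P_n^{d_1}\to\mathbb{Z}/2$ counting, modulo $2$, the full-weight monomials of image $S$ occurring in an element. It kills every monomial of non-full weight, so by Proposition~\ref{proposition:3.1} it factors through $(P_n/F_n)^{d_1}$ once $\mu_S(\mathcal{A}_{+}(P_n)\cap P_n^{d_1})=0$; granting this, $\mu_S(\sigma(v_\ell))=1$ precisely when $S$ is the image of $\sigma(v_\ell)$, so the functionals $\mu_S$ separate the classes $\pi(\sigma(v_\ell))$ — which gives (b), the uniqueness of the pair in the proposition, and even the linear independence of the $\pi(\sigma(v_\ell))$. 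A preliminary reduction simplifies the target: if a full-weight monomial $v$ occurs in $\mathrm{Sq}^r(u)$ with $u=\prod_i x_i^{f_i}$, then — because $\binom{f_i}{b_i}$ odd together with $f_i+b_i=2^k-1$ forces $f_i=2^k-1$ and $b_i=0$ — the image of $v$ must be $\{i:f_i\ne 2^k-1\}$; hence all full-weight monomials in $\mathrm{Sq}^r(u)$ share a single image, and $\mu_S(\mathcal{A}_{+}(P_n)\cap P_n^{d_1})=0$ boils down to the assertion that for every monomial $u$ and every $r\ge 1$ the number of full-weight monomials occurring in $\mathrm{Sq}^r(u)$ is even.

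This last point is the step I expect to be the main obstacle. The plan is to prove it by a fixed-point-free involution on the set of exponent distributions $(b_i)$ with $\sum b_i=r$, $\binom{f_i}{b_i}$ odd for all $i$, and $\prod_i x_i^{f_i+b_i}$ of full weight — for instance, when two variables satisfy $f_i=f_{i'}$, interchanging $b_i$ and $b_{i'}$: this preserves the conditions and has no fixed point on this set (a fixed point would force $x_i$ and $x_{i'}$ to the same exponent, hence to $2^k-1$, which is impossible here); the remaining case, where the relevant $f_i$ are pairwise distinct, should be handled by a more careful redistribution of a binary block between two variables that share it in their $f_i$. (An alternative route to (b) is to fix a total order on monomials refining the weight order, show that no $\sigma(v_\ell)$ is the leading term of an element of $\mathcal{A}_{+}(P_n)$ of degree $d_1$, and conclude the linear independence from that; the combinatorial content is the same.)
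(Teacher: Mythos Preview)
Your existence argument (part (a)) matches the paper's approach: Lemmas~3.3 and~3.4 are precisely the transposition and repetition-shifting moves you describe, each implemented by applying a single $\mathrm{Sq}^{2^j}$ to a suitably constructed monomial and discarding lower-weight terms.

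Your uniqueness argument (part (b)) is a genuinely different route, and it has a gap. You correctly reduce to the parity claim that $\mathrm{Sq}^r(u)$ contains an even number of full-weight monomials for every monomial $u$ and every $r\ge 1$; this claim is true (it follows \emph{a posteriori} from the proposition), but your proposed involution does not cover the case where the exponents $f_i$ with $f_i\neq 2^k-1$ are pairwise distinct, and you say so yourself. The suggested ``redistribution of a binary block between two variables that share it'' is not a proof: there is no reason two distinct $f_i<2^k-1$ must share any binary block, so the involution as stated may simply fail to exist. The alternative you mention (leading terms in a refinement of the weight order) has the same combinatorial content and the same missing step.

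The paper avoids this difficulty entirely. For each $\sigma\in\mathrm{Mono}(m)$ it defines a \emph{ring} homomorphism $\lambda(\sigma)\colon P_n\to P_{n-1}$ sending $x_{\sigma(m)}\mapsto x_{\sigma(1)}+\cdots+x_{\sigma(m-1)}$ and relabelling the remaining variables. Because $\lambda(\sigma)$ is an algebra map on degree-one generators, it commutes with all Steenrod squares and therefore carries $F_n$ into $F_{n-1}$ automatically; no parity computation on $\mathrm{Sq}^r(u)$ is needed. Since $(P_{n-1}/F_{n-1})^{d_1}$ is one-dimensional, each $\lambda(\sigma)$ yields a linear functional on $(P_n/F_n)^{d_1}$, and Lemma~3.5 computes that $\lambda(\sigma)(\tau(v_\ell))\not\equiv 0$ exactly when $\{\tau(1),\dots,\tau(\ell)\}\subseteq\{\sigma(1),\dots,\sigma(m)\}$. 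The resulting matrix is unitriangular for any linear extension of the inclusion order on images, so the classes $\pi(\sigma(v_\ell))$ are linearly independent and uniqueness follows. If you want to repair your approach, the quickest fix is to replace your $\mu_S$ by these $\lambda(\sigma)$: they realise the same separating role, but their $\mathcal{A}$-linearity comes for free from multiplicativity.
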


Now, we prove Proposition~\ref{proposition:3.2}. 
For each monomial $v$ in $P_n$, let 
\[
u_j(v)=(\alpha_{1j}(v), \dots, \alpha_{nj}(v)).
\]


\begin{lemma}\label{lemma:3.3}
Let $v$ be a monomial such that
\[
(\omega_0(v), \dots, \omega_{k-1}(v))=(n-1, \dots, n-1).
\]
Suppose that $u_j(v)<u_{j+1}(v)$. 
Let $v'$ be the unique monomial such that $u_a(v)=u_a(v')$ for $a\not=j, j+1$ and
$u_j(v')=u_{j+1}(v)$, $u_{j+1}(v')=u_{j}(v)$. Then, 
\[
v\equiv v'\mod F_n.
\]
\end{lemma}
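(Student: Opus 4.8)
The plan is to exhibit $v+v'$ as $\mathrm{Sq}^{2^j}$ applied to a single monomial, up to an error that manifestly lies in $F_n$; since $\mathrm{Sq}^{2^j}$ has positive degree, its value already lies in $\mathcal{A}_+(P_n)\subseteq F_n$, so this will give $v\equiv v'\bmod F_n$. The first step is to read off the shape of $v$ in bit-positions $j$ and $j+1$. From $\omega_j(v)=\omega_{j+1}(v)=n-1$ (so in effect $0\le j\le k-2$, which is the range in which the lemma is used) a short count over the four patterns $00$, $01$, $10$, $11$ that the $n$ variables can display in these two columns forces exactly $n-2$ of them to carry both bits, exactly one variable $x_p$ to carry only the $j$-th bit, and exactly one variable $x_q$ to carry only the $(j+1)$-st bit; the hypothesis $u_j(v)<u_{j+1}(v)$ then amounts to $q<p$, though this is not used below. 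Thus $v'$ is obtained from $v$ by moving the bit of $x_p$ from position $j$ up to $j+1$ and the bit of $x_q$ from position $j+1$ down to $j$. Writing the exponent of $x_q$ in $v$ as $2^{j+1}+o_q$, where $o_q$ has no bit in positions $j$ or $j+1$, let $v''$ be the monomial obtained from $v$ by replacing that exponent with $2^{j}+o_q$.

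The key step is the congruence $\mathrm{Sq}^{2^j}(v'')\equiv v+v'\pmod{F_n}$. To prove it I would expand $\mathrm{Sq}^{2^j}(v'')$ by the Cartan formula as a sum, over all ways of writing $2^j=\sum_m c_m$ with each $c_m$ a submask (in binary) of the exponent of $x_m$ in $v''$, of the monomials $\prod_m x_m^{(\text{exponent in }v'')+c_m}$. The summands concentrated at a single $c_{m_0}=2^j$ come from the variables whose exponent in $v''$ has the $j$-th bit set, i.e.\ from $x_p$, $x_q$, and the $n-2$ full variables: $m_0=q$ reproduces $v$, $m_0=p$ reproduces $v'$ (both with binomial coefficient $1$), while $m_0=x_i$ for a full $x_i$ gives a monomial in which $x_i$ has lost both its $j$-th and $(j+1)$-st bits and gained a bit in some position $\ge j+2$; this monomial has the same $\omega$-entries as $v$ in positions $0,\dots,j$ but $\omega_{j+1}=n-3<n-1$, so its length-$k$ $\omega$-prefix is strictly below $(n-1,\dots,n-1)$ and it lies in $F_n$. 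In each remaining summand $2^j$ is split among at least two variables; if $t$ denotes the lowest bit-position appearing among the $c_m$ then $t<j$, and since $2^j$ has a zero in position $t$ the bit $t$ must appear in at least two of the $c_m$, each turning the $t$-th bit of the corresponding exponent from $1$ to $0$ without touching any lower position, so $\omega_t$ falls by at least $2$ while $\omega_0,\dots,\omega_{t-1}$ stay fixed, again forcing the length-$k$ $\omega$-prefix below $(n-1,\dots,n-1)$ and the summand into $F_n$. Summing over all decompositions yields the congruence.

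The bit-arithmetic in the two cases is routine; the step I expect to take the most care is verifying that no summand of $\mathrm{Sq}^{2^j}(v'')$ other than $v$ and $v'$ has length-$k$ $\omega$-prefix equal to $(n-1,\dots,n-1)$, so that all of them land in $F_n$. What makes this work is, for the single-variable summands, that pushing a bit of a full variable up out of columns $j,j+1$ costs $\omega_{j+1}$ two of its ones (one from the passage to $v''$, one from the operation itself), and, for the split summands, the parity observation that the lowest bit-position occurring among the parts $c_m$ must occur an even, hence at least two, number of times. Granting this, the congruence $\mathrm{Sq}^{2^j}(v'')\equiv v+v'\pmod{F_n}$ holds, and since $\mathrm{Sq}^{2^j}(v'')\in\mathcal{A}_+(P_n)\subseteq F_n$ we conclude $v+v'\in F_n$, that is, $v\equiv v'\bmod F_n$.
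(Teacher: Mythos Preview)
Your proof is correct and follows essentially the same approach as the paper: you apply $\mathrm{Sq}^{2^j}$ to an auxiliary monomial---your $v''$ coincides exactly with the paper's $w$, which has $u_j=(1,\dots,1)$ and $u_{j+1}$ equal to the bitwise product of $u_j(v)$ and $u_{j+1}(v)$---and show that modulo $F_n$ the result is $v+v'$. The only difference is in packaging: the paper factors $w=w_0\,w_1^{2^j}$ and uses $\mathrm{Sq}^b(w_1^{2^j})=0$ for $0<b<2^j$ to dispose of the ``split'' summands in one stroke, whereas you expand the Cartan formula over all variables and handle those summands by your parity argument on the lowest occurring bit; both routes reach the same conclusion.
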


\begin{proof}
Let $w$ be the monomial such that 
\[
u_{j}(w)=(1, \dots, 1),
\]
\[
u_{j+1}(w)=(\alpha_{1,j}(v)\alpha_{1,j+1}(v), \dots, 
\alpha_{n,j}(v)\alpha_{n,j+1}(v)),
\]
and 
\[
u_a(w)=u_a(v)
\]
 for $a \not= j, j+1$.

Let $w_0$, $w_1$ be monomials such that
\[
(\omega_0(w_0), \dots, \omega_{j-1}(w_0), \omega_j(w_0), \omega_{j+1}(w_0), \dots )=(\omega_0(w), \dots, \omega_{j-1}(w), 0, 0, \dots  )
\]
and 
\[
(\omega_0(w_1), \omega_1(w_1), \dots )=(\omega_j(w), \omega_{j+1}(w), \dots).
\]
Then, we have
\[
w=w_0 (w_1)^{2^j} 
\]
and, by the Cartan formula, we have
\[
\sq^{2^j} (w)= \sum_{a+b=2^j}  \sq^a (w_0) \sq^b (w_1^{2^{j}}).
\]
Furthermore, by the Cartan formula, for $0<b<2^j$, we have
\[
\sq^b (w_1^{2^j})=0
\]
and for $b=2^j$, we have 
\[
\sq^b (w_1^{2^j})=(\sq^1 (w_1))^{2^j}.
\]
So,  we have
\[
\sq^{2^j} (w)=w_0   (\sq^1 (w_1))^{2^{j}}+ (\sq^{2^j}(w_0)) w_1^{2^j}.
\]
For $a>0$ and a monomial $w'$, $\sq^a (w')$ is a linear combination of monomials $w''$ such that
\[ 
(\omega_0(w''), \omega_1(w''), \dots )< (\omega_0)w'), \omega_1(w'), \dots).
\]
Hence, we have
\[
\sq^{2^{j}}(w) \equiv  w_0 (\sq^1 (w_1))^{2^j} \equiv v+v' \mod F'_n,
\]
where $F'_n$ is the subspace spanned by monomials $v''$ such that 
\[
(\omega_0(v''), \dots, \omega_{k-1}(v''))<(n-1, \dots, n-1).
\]
Therefore, we have the desired result.
\end{proof}

Thus, for each monomial $v$ in $P_n^{d_1}$ such that 
\[
(\omega_0(v), \dots, \omega_{k-1}(v))=(n-1, \dots, n-1),
\]
there exists a  monomial $v'$ such that 
\[
(\omega_0(v'), \dots, \omega_{k-1}(v'))=(n-1, \dots, n-1),
\]
\[
u_0(v')\geq u_1(v') \cdots \geq u_{k-1}(v')
\]
 and 
 \[
 v\equiv v'\mod F_n.
 \]


\begin{lemma}\label{lemma:3.4}
Let $v$ be a monomial such that
\[
(\omega_0(v), \dots, \omega_{k-1}(v))=(n-1, \dots, n-1).
\]
Suppose that 
\[
u_j(v)=u_{j+1}(v)>u_{j+2}(v).
\]
Let $v'$ be the unique monomial such that $u_a(v)=u_a(v)$ for $a \not=j, j+1, j+2$ and
$u_j(v')>u_{j+1}(v')=u_{j+2}(v')$, $u_{j}(v')=u_{j}(v)$, $u_{j+2}(v')=u_{j+2}(v)$. Then, $v\equiv v'$ modulo $F_n$.
\end{lemma}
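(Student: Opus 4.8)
The plan is to mimic the proof of Lemma~\ref{lemma:3.3}: everything is done modulo $\mathcal{A}_{+}(P_n)$ and modulo monomials whose length-$k$ $\omega$-prefix is strictly below $(n-1,\dots,n-1)$, all of which lie in $F_n$. I would argue in two moves. First, a single use of the Cartan formula shows $v\equiv\tilde v\bmod F_n$, where $\tilde v$ is the monomial agreeing with $v$ in every column except that $u_j(\tilde v)=u_{j+1}(\tilde v)=u_{j+2}(v)$ and $u_{j+2}(\tilde v)=u_j(v)$; thus $\tilde v$ interchanges the common zero-position of columns $j,j+1$ with the zero-position of column $j+2$. Second, two applications of Lemma~\ref{lemma:3.3} give $\tilde v\equiv v'\bmod F_n$, and composing yields the assertion. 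Throughout I assume $j+2\le k-1$, which is the range relevant to the later use of the lemma; outside it the claim is either vacuous modulo $F_n$ or a routine variant of the same computation.

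For the first move, let $p$ and $q$ be the coordinates at which $u_j(v)=u_{j+1}(v)$ and $u_{j+2}(v)$ respectively vanish; since each $u_a(v)$ with $a\le k-1$ has exactly $n-1$ ones and $u_j(v)>u_{j+2}(v)$, we get $p>q$. Let $w$ be the monomial with $u_a(w)=u_a(v)$ for $a\notin\{j,j+1,j+2\}$, with $u_j(w)=u_{j+1}(w)=(1,\dots,1)$, and with $u_{j+2}(w)$ the vector that vanishes exactly at $p$ and $q$ (equivalently, its $i$-th entry is $\alpha_{ij}(v)\alpha_{i,j+2}(v)$); a weight count gives $\deg w=\deg v-2^j$. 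Write $w=w_0\,w_1^{2^j}$ by splitting off the columns below $j$. Since the $0$-th column of $w_1$ is $(1,\dots,1)$ we have $w_1=x_1\cdots x_n\cdot S^2$ for a monomial $S$, so $\sq^1 w_1=\sum_i w_1 x_i$ and hence $w_0(\sq^1 w_1)^{2^j}=\sum_{i=1}^n w\,x_i^{2^j}$. Applying $\sq^{2^j}$ and expanding by the Cartan formula exactly as in Lemma~\ref{lemma:3.3} (the terms $\sq^a w_0$ with $a>0$ lower the $\omega$-prefix in columns below $j$, and $\sq^b(w_1^{2^j})=0$ for $0<b<2^j$) yields
\[
\sq^{2^j}(w)\equiv \sum_{i=1}^n w\,x_i^{2^j}\bmod F_n.
\]
Now in $w\,x_i^{2^j}$ the exponent $e_i$ is raised by $2^j$; as $\alpha_{ij}(w)=\alpha_{i,j+1}(w)=1$ the carry runs from column $j$ up through column $j+1$ into column $j+2$, where it stops precisely when $\alpha_{i,j+2}(w)=0$, i.e.\ when $i\in\{p,q\}$. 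A direct check gives $w\,x_p^{2^j}=v$ and $w\,x_q^{2^j}=\tilde v$, while for $i\notin\{p,q\}$ the carry passes on through column $j+2$, so $\omega_{j+2}(w\,x_i^{2^j})=n-3$ and $w\,x_i^{2^j}\in F_n$. Since $\sq^{2^j}(w)\in\mathcal{A}_{+}(P_n)\subseteq F_n$, this gives $v\equiv\tilde v\bmod F_n$.

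For the second move, $\tilde v$ still has $(\omega_0(\tilde v),\dots,\omega_{k-1}(\tilde v))=(n-1,\dots,n-1)$, and $p>q$ forces $u_{j+1}(\tilde v)=u_{j+2}(v)<u_j(v)=u_{j+2}(\tilde v)$; so Lemma~\ref{lemma:3.3} applied to the columns $j+1,j+2$ gives $\tilde v\equiv\tilde v_1\bmod F_n$, with $\tilde v_1$ obtained by transposing those two columns. Then $u_j(\tilde v_1)<u_{j+1}(\tilde v_1)$, so Lemma~\ref{lemma:3.3} applied to the columns $j,j+1$ gives $\tilde v_1\equiv v'\bmod F_n$. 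Chaining, $v\equiv\tilde v\equiv\tilde v_1\equiv v'\bmod F_n$.

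The main obstacle I anticipate is the carry analysis behind $\sq^{2^j}(w)\equiv\sum_i w\,x_i^{2^j}$ together with the identification of the surviving summands: one must verify that among the $n$ monomials $w\,x_i^{2^j}$ exactly those with $i\in\{p,q\}$ retain the maximal $\omega$-prefix and that they are literally $v$ and $\tilde v$. This is exactly what dictates the choice of $u_{j+2}(w)$: its two zeros at $p$ and $q$ are the only rows in which the cascading carry is absorbed inside column $j+2$ rather than propagating further. The remaining ingredients — the identity $\deg w=\deg v-2^j$, the membership of the exceptional summands in $F_n$, and the applicability of Lemma~\ref{lemma:3.3} at each step — are routine once this is in place.
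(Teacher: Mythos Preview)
Your proposal is correct and follows essentially the same route as the paper: the auxiliary monomial $w$ you build (with $u_j(w)=u_{j+1}(w)=(1,\dots,1)$ and $u_{j+2}(w)$ vanishing exactly at $p$ and $q$) is precisely the paper's $w$, your $\tilde v$ is the paper's intermediate monomial $v''$, and your two explicit applications of Lemma~\ref{lemma:3.3} are what the paper compresses into ``by applying Lemma~\ref{lemma:3.3} repeatedly.'' The only difference is expository: you spell out the carry analysis behind $\sq^{2^j}(w)\equiv v+\tilde v\bmod F_n$ in detail, whereas the paper defers to ``as in the proof of Lemma~\ref{lemma:3.3}.''
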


\begin{proof}
Let $w$ be the unique  monomial such that 
\[
u_j(w)=u_{j+1}(w)=(1,\dots, 1),
\]
\[
u_{j+2}(w)=(\alpha_{1,j}(v)\alpha_{1, j+2}(v), \dots, \alpha_{n,j}(v)\alpha_{n, j+2}(v)),
\]
and
\[
u_{a}(w)=u_{a}(v)
\]
for $a \not=j, j+1, j+2$.
Let $v''$ be the unique monomial such that 
\[
u_{j}(v'')=u_{j+1}(v'')<u_{j+2}(v''), 
\]
\[
u_{j}(v'')=u_{j+2}(v),  u_{j+2}(v'')=u_{j}(v),
\]
and
\[
u_{a}(v'')=u_a(v)
\]
for $a \not=j, j+1, j+2$. 
Then, as in the proof of Lemma~\ref{lemma:3.3}, we have
\[
\sq^{2^{j}} w\equiv v+v'' \mod F'_n, 
\]
where $F'_n$ is the subspace used in the proof of Lemma~\ref{lemma:3.3}.
Hence, we have 
\[
v\equiv v'' \mod F_n.
\]
By applying Lemma~\ref{lemma:3.3} repeatedly,  we have 
\[
 v''\equiv v' \mod F_n. \qedhere
\]
\end{proof}

Thus, by Lemmas~\ref{lemma:3.3} and \ref{lemma:3.4},  for each monomial $v$ in $P_n^{d_1}$ such that 
\[
(\omega_0(v), \dots, \omega_{k-1}(v))=(n-1, \dots, n-1),
\]
there exists a  monomial $v'$ such that 
\[
(\omega_0(v'), \dots, \omega_{k-1}(v'))=(n-1, \dots, n-1),
\]
\[
u_0(v')> u_1(v') \cdots >u_{\ell}(v')=\cdots =u_{k-1}(v')
\]
 and 
 \[
 v\equiv v'\mod F_n.
 \]
In other words, there exist $1\leq \ell \leq \min\{ k, n\}$ and $\sigma\in \mathrm{Mono}(\ell)$ such that 
\[
v\equiv \sigma(v_{\ell}) \mod F_n,
\]
where $\alpha_{i}(v)<k$ for $i \in \{ \sigma(1), \dots, \sigma(\ell)\}$ and $\alpha_i(v)=k$ for $i \not \in \{\sigma(1), \dots, \sigma(\ell)\}$.

Next, we prove that 
\[
\{ \pi(\sigma(v_\ell)) \; |\; 1\leq \ell\leq \min\{k, n\}, \sigma \in \mathrm{Mono}(\ell)\}
\]
is linearly independent.
Let $\lambda(\sigma)\colon P_n \to P_{n-1}$ be a ring homomorphism defined by 
\begin{align*}
\lambda(\sigma)(x_i)&= x_i &\mbox{for $i<\sigma(\ell)$,}
\\
\lambda(\sigma)(x_i)&= \sigma(x_1)+\cdots + \sigma(x_{\ell-1})  & \mbox{for $i=\sigma(\ell)$,}
\\
\lambda(\sigma)(x_i)&=x_{i-1}  & \mbox{for $i>\sigma(\ell)$.}
\end{align*}
Let us write 
\[
\tilde{u}_j(v)=x_1^{\alpha_{1j}(v)}\cdots x_n^{\alpha_{nj}(v)}.
\]
Then, we have
\[
v=\prod_{j=0}^{\infty} (\tilde{u}_j(v))^{2^j}
.
\]
It is clear that
\[
\lambda(\sigma)(v)=\prod_{j=0}^{\infty} (\lambda(\sigma)(\tilde{u}_j(v)))^{2^j}
\]
and
\[
\lambda(\sigma)(x_1\cdots\widehat{x}_a \cdots x_n)
=\left\{ 
\begin{array}{ll}
x_1\cdots x_{n-1} +\sum v'& \mbox{if $a\in \{ \sigma(1), \dots, \sigma(\ell) \}$,}
\\
\sum v' & \mbox{otherwise,}
\end{array}
\right.
\]
where $x_1\cdots\widehat{x}_a \cdots x_n$ is the monomial of degree $n-1$ obtained from $x_1\cdots x_n$ by removing $x_a$ and 
$\sum v'$ indicates a linear combination of monomials 
$v'$ such that $\omega_0(v')<n-1$.
Therefore, it is easy to see that the following Lemma~\ref{lemma:3.5} holds.


\begin{lemma}\label{lemma:3.5}
Suppose $1\leq \ell, m \leq \min\{k, n\}$, $\sigma\in \mathrm{Mono}({m})$ and $\tau\in \mathrm{Mono}(\ell)$. 
Then, 
\[
\lambda(\sigma)(\tau(v_\ell))\equiv x_1^{2^k-1}\cdots x_{n-1}^{2^k-1} \not \equiv 0 \mod F_n
\]
if and only if 
\[
\{ \sigma(1), \dots, \sigma(m)\} \supseteq \{ \tau(1), \dots, \tau(\ell)\},
\]
where $v_\ell$ is the monomial $v_\ell$ in Proposition~\ref{proposition:3.2}.
\end{lemma}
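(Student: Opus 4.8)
The plan is to reduce the lemma to the displayed facts about $\lambda(\sigma)$ recorded just above by computing everything in terms of the binary digits $\alpha_{ij}$ of the exponents. First I would record the bit pattern of $v_\ell$. From the exponents $2^{\ell-1}-1$, $2^{k}-2^{\ell-i}-1$ $(2\le i\le\ell)$ and $2^{k}-1$ $(\ell<i\le n)$ one checks that, for every $j$ with $0\le j\le k-1$, the vector $u_j(v_\ell)$ has exactly one zero coordinate: it is in position $\ell-j$ when $0\le j\le\ell-2$, and in position $1$ when $\ell-1\le j\le k-1$. Equivalently,
\[
v_\ell=\prod_{j=0}^{k-1}\bigl(x_1\cdots\widehat{x}_{c_j}\cdots x_n\bigr)^{2^j},\qquad
c_j=\ell-j\ \ (0\le j\le\ell-2),\quad c_j=1\ \ (\ell-1\le j\le k-1).
\]
Since $\tau$ merely permutes the variables, $\tau(v_\ell)=\prod_{j=0}^{k-1}(x_1\cdots\widehat{x}_{a_j}\cdots x_n)^{2^j}$ with $a_j=\tau(c_j)$; and as $j$ runs over $\{0,\dots,k-1\}$ the index $c_j$ runs over $\{1,2,\dots,\ell\}$, so $\{a_0,\dots,a_{k-1}\}=\{\tau(1),\dots,\tau(\ell)\}$.

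Using that $\lambda(\sigma)$ is a ring homomorphism with $\lambda(\sigma)(v)=\prod_j(\lambda(\sigma)(\tilde u_j(v)))^{2^j}$, I would then write
\[
\lambda(\sigma)(\tau(v_\ell))=\prod_{j=0}^{k-1}\bigl(\lambda(\sigma)(x_1\cdots\widehat{x}_{a_j}\cdots x_n)\bigr)^{2^j},
\]
substitute the displayed case formula for $\lambda(\sigma)(x_1\cdots\widehat{x}_a\cdots x_n)$, and expand the product (over $\mathbb{Z}/2$ the $2^j$-th power distributes over sums). The term obtained by selecting the summand $x_1\cdots x_{n-1}$ from every factor equals $\prod_{j=0}^{k-1}(x_1\cdots x_{n-1})^{2^j}=x_1^{2^k-1}\cdots x_{n-1}^{2^k-1}$, and this term occurs in the expansion precisely when each factor $\lambda(\sigma)(x_1\cdots\widehat{x}_{a_j}\cdots x_n)$ contains $x_1\cdots x_{n-1}$ as a summand, i.e. precisely when every $a_j\in\{\sigma(1),\dots,\sigma(m)\}$, i.e. precisely when $\{\tau(1),\dots,\tau(\ell)\}\subseteq\{\sigma(1),\dots,\sigma(m)\}$.

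The remaining task is to show that every other term of the expansion lies in $F_{n-1}$, and this is the single step that needs care. Such a term is a monomial $M=\prod_{j=0}^{k-1}w_j^{2^j}$, where each $w_j$ has degree $n-1$, each $w_j$ is either $x_1\cdots x_{n-1}$ or satisfies $\omega_0(w_j)<n-1$, and at least one factor is of the second kind; let $j_1$ be the least index with $\omega_0(w_{j_1})<n-1$. The factors $w_j^{2^j}$ with $j<j_1$ are powers of $x_1\cdots x_{n-1}$ and so occupy exactly the bit positions $0,\dots,j_1-1$, which are disjoint from the bit positions occurring in $\prod_{j\ge j_1}w_j^{2^j}$; reading off the resulting $\omega$-sequence one finds $\omega_a(M)=n-1$ for $a<j_1$ and $\omega_{j_1}(M)=\omega_0(w_{j_1})<n-1$. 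Since $j_1\le k-1$, this gives $(\omega_0(M),\dots,\omega_{k-1}(M))<(n-1,\dots,n-1)$, hence $M\in F_{n-1}$. Combining the two cases, $\lambda(\sigma)(\tau(v_\ell))$ is congruent modulo $F_{n-1}$ to $x_1^{2^k-1}\cdots x_{n-1}^{2^k-1}$ when $\{\tau(1),\dots,\tau(\ell)\}\subseteq\{\sigma(1),\dots,\sigma(m)\}$ and to $0$ otherwise; since $x_1^{2^k-1}\cdots x_{n-1}^{2^k-1}$ spans $(P_{n-1}/F_{n-1})^{d_1}=\mathbb{Z}/2$ and is therefore never congruent to $0$, this is exactly the assertion. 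The main obstacle is precisely this last bookkeeping, together with getting the index arithmetic in the bit decomposition of $v_\ell$ exactly right; everything else is formal once the displayed facts about $\lambda(\sigma)$ are in hand.
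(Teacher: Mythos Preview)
Your argument is correct and is exactly the verification the paper leaves to the reader: the paper provides no proof of Lemma~3.5 beyond the displayed formula for $\lambda(\sigma)(x_1\cdots\widehat{x}_a\cdots x_n)$ and the remark that the lemma is then ``easy to see,'' and you have supplied precisely that check. Your bit-pattern description of $v_\ell$, the identification $\{a_0,\dots,a_{k-1}\}=\{\tau(1),\dots,\tau(\ell)\}$, and the carry-free analysis showing that any cross term $M=\prod_j w_j^{2^j}$ with some $\omega_0(w_{j_1})<n-1$ satisfies $(\omega_0(M),\dots,\omega_{k-1}(M))<(n-1,\dots,n-1)$ are all sound; note only that the congruence should be read modulo $F_{n-1}$ (the paper's ``$\bmod\ F_n$'' is a typo you have silently corrected).
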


It follows from Lemma~\ref{lemma:3.5} that the linear map
\[
\lambda \colon (P_n/F_n)^{d_1} \to \prod_{1\leq \ell\leq \min\{ k, n\}} \left( \prod_{\sigma\in \mathrm{Mono}(\ell)} (P_{n-1}/F_{n-1})^{d_1}\right)
\]
sending $\pi(v)$ to
$
( \pi(\lambda(\sigma)(v)) )
$
is an isomorphism.
Thus,  
\[
\{ \pi(\sigma(v_\ell)) \; |\; 1\leq \ell\leq \min\{k, n\}, \sigma \in \mathrm{Mono}(\ell)\}
\]
 is  a basis for $(P_n/F_n)^{d_1}$.


\section{The motivic hit problem}\label{section:4}

In this section, we give the details of motivic hit problem.
Let 
\[
N_n=M_n/(\tau)=\Lambda_{n}(x_1, \dots, x_n) \otimes \mathbb{Z}/2[y_1, \dots, y_n]
\]
and 
\[
\mathcal{A}'=\mathcal{A}^{*,*}/(\tau).
\]
Then, $N_n$ is  an $\mathcal{A}'$-module.
Let $N_n^{d,*}$ be the subspace of $N_n$ spanned by elements of degree $(d, *)$.
From now on, for the sake of simplicity, we say an element is of degree $d$ if its degree is  $(d, *)$.
Let $\mathcal{A}'_{+}$ be the subset of $\mathcal{A}'$ consisting of positive degree elements in $\mathcal{A}'$ and
\[
\mathcal{A}'_{+}(N_n)=\{ ax \;|\; a \in \mathcal{A}'_{+}, \; x \in N_n\}.
\]
Then, it is easy to see that $QM_n^{d,*}$ in Section~\ref{section:1} is isomorphic to
\[
N_n^{d,*}/(\mathcal{A}'_{+}(N_n) \cap N_n^{d,*}).
\]

We consider the counterpart of $F_n$ in $N_n$.
For the sake of notational simplicity, we write $\Lambda_{n}$, $Y_n$ for 
$\Lambda_{n}(x_1, \dots, x_n)$, $\mathbb{Z}/2[y_1, \dots, y_n]$, respectively. 
We denote by $\Lambda_{n}^a$, $Y_n^{2b}$ the subspaces of $\Lambda_{n}$, $Y_n$ 
spanned by  elements of degree $a$, $2b$, respectively.
For a monomial
\[
z=x_1^{\varepsilon_1}\cdots x_n^{\varepsilon_n} y_1^{e_1}\cdots y_n^{e_n}
\]
in $N_n^{d,*}$,
let us define  $\alpha_{ij}(z)\in \{0,1\}$ by
\[
\varepsilon_i+2e_i=\sum_{j=0}^\infty \alpha_{ij}(z) 2^j.
\]
We define non-negative integer $\alpha_i(z)$, $\omega_j(z)$ by 
\begin{align*}
\alpha_i(z)&= \sum_{j=0}^{\infty} \alpha_{ij}(z), 
\\
\omega_j(z)&=\sum_{i=1}^n \alpha_{ij}(z),
\end{align*}
respectively.
Let $\mathcal{P}$ be the subalgebra of $\mathcal{A}'$ generated by reduced power operations $\wp^1, \wp^2, \wp^3, \dots$ of degree $2$, $4$, $6, \dots .$
Let $\mathcal{P}_+$ be the subset of $\mathcal{P}$ consisting of positive degree elements in $\mathcal{P}$.
Let $G_n$ be the subspace spanned by 
\[
\mathcal{P}_{+}(Y_n)= \{ a x \;|\; a\in \mathcal{P}_+, \; x \in Y_n\}
\]
and monomials $z$ such that
\[
(\omega_1(z), \dots, \omega_k(z))<(n-1, \dots, n-1).
\]
Then,  $G_n\subset Y_n$ is the counterpart of $F_n$ in $Y_n$.
The ring isomorphism $\psi\colon Y_n\to P_n$ sending $y_i$ to $x_i$ 
commutes with the action of $\mathcal{P}$, $\mathcal{A}$ in the sense that
$\psi(\wp^c y)= \sq^c \psi(y)$. Thus, it induces an isomorphism
\[
\psi\colon (Y_n/G_n)^{2b} \to (P_n/F_n)^{b}.
\]
We use this isomorphism to identify $ (Y_n/G_n)^{2b}$ with $(P_n/F_n)^{b}$, so that we can apply results
on $(P_n/F_n)^{d_1}$ in Section~\ref{section:3} for $ (Y_n/G_n)^{2d_1}$.

We denote the projection from $\Lambda_{n}^a \otimes Y_n^{2b}$ to $\Lambda_{n}^a \otimes (Y_n/G_n)^{2b}$ by
\[
\pi\colon \Lambda_{n}^a \otimes Y_n^{2b} \to \Lambda_{n}^a \otimes (Y_n/G_n)^{2b}.
\]
Let $H_n$ be the subspace of $N_n$ spanned by 
\[
\mathcal{A}'_{+}(N_n)
\]
and monomials $z$ such that 
\[
(\omega_0(z), \omega_1(z), \dots, \omega_{k}(z))<(k, n-1, \dots, n-1).
\]
Then,  $H_n\subset N_n$ is the counterpart of $F_n\subset P_n$ in $N_n$.
Since $Q_0$ maps $\Lambda_{n}^{a+1} \otimes Y_n^{2(b-1)}$ to 
$\Lambda_{n}^{a} \otimes Y_n^{2b}$ and, for $c>0$, $\wp^c$ acts trivially on $\Lambda_n$, 
we have the following direct sum decomposition
\[
(N_n/H_n)^{d,*} =\bigoplus_{a+2b=d} \Lambda_{n}^a \otimes (Y_n/G_n)^{2b}/\pi(Q_0(\Lambda_{n}^{a+1}\otimes Y_n^{2(b-1)})).
\]

We prove the following proposition using Propositions~\ref{proposition:3.1} and \ref{proposition:3.2}.


\begin{proposition}\label{proposition:4.1}
Suppose that $d=k+2d_1$, $d_1=(n-1)(2^k-1)$, $1\leq k <n$.
For each  monomial $z$ in 
\[
\Lambda_{n}^k \otimes Y_n^{2d_1}, 
\]
$z\in \Lambda_{n}^k \otimes G_n^{2d_1}$ or 
there exist unique
$\ell$ in $\{ 1, \dots, k\}$, 
$\sigma_1\in \mathrm{Mono}(k)$ and  $\sigma_2\in \mathrm{Mono}(\ell)$ such that
\[
z\equiv 
 \sigma_1(x_1\cdots x_k) \sigma_2(\psi^{-1}(v_\ell)) \mod \Lambda_{n}^k \otimes G_n^{2d_1},
\]
where $v_\ell$ is the monomial $v_\ell \in P^{d_1}_n$  in Proposition~\ref{proposition:3.2}.
\end{proposition}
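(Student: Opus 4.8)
The plan is to reduce the statement to Propositions~\ref{proposition:3.1} and~\ref{proposition:3.2} by separating a monomial of $\Lambda_n^k \otimes Y_n^{2d_1}$ into its exterior part and its polynomial part and comparing the weight sequences $\omega_j$ across the isomorphism $\psi$.

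First I would factor a monomial $z$ in $\Lambda_n^k \otimes Y_n^{2d_1}$ as $z = z' z''$, where $z'$ is a squarefree monomial of degree $k$ in $x_1, \dots, x_n$ --- so $z' = \sigma_1(x_1 \cdots x_k)$ for a unique $\sigma_1 \in \mathrm{Mono}(k)$ --- and $z''$ is a monomial in $Y_n^{2d_1}$; set $w = \psi(z'') \in P_n^{d_1}$. Reading the binary expansion in the defining relation $\varepsilon_i + 2 e_i = \sum_j \alpha_{ij}(z) 2^j$ gives $\alpha_{i0}(z) = \varepsilon_i$ and $\alpha_{ij}(z) = \alpha_{i,j-1}(w)$ for $j \geq 1$, hence $\omega_0(z) = k$ and $(\omega_1(z), \dots, \omega_k(z)) = (\omega_0(w), \dots, \omega_{k-1}(w))$; applying the same computation to $z''$ in place of $z$ shows, via the induced isomorphism $\psi\colon (Y_n/G_n)^{2d_1} \to (P_n/F_n)^{d_1}$, that $z'' \in G_n$ if and only if $(\omega_0(w), \dots, \omega_{k-1}(w)) < (n-1, \dots, n-1)$, in which case $z = z' z'' \in \Lambda_n^k \otimes G_n^{2d_1}$ since $z' \in \Lambda_n^k$. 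This disposes of the first alternative.

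By Proposition~\ref{proposition:3.1} one always has $(\omega_0(w), \dots, \omega_{k-1}(w)) \leq (n-1, \dots, n-1)$, so it remains to treat equality. Then Proposition~\ref{proposition:3.2}, with $\min\{k,n\} = k$ because $k < n$, furnishes a unique pair $(\ell, \sigma_2)$, $1 \leq \ell \leq k$ and $\sigma_2 \in \mathrm{Mono}(\ell)$, with $w \equiv \sigma_2(v_\ell) \mod F_n$. Since $\psi$ intertwines the permutation actions on $Y_n$ and $P_n$ and carries $G_n$ onto $F_n$, applying $\psi^{-1}$ gives $z'' - \sigma_2(\psi^{-1}(v_\ell)) \in G_n^{2d_1}$, and multiplying by $z' = \sigma_1(x_1 \cdots x_k)$ yields $z - \sigma_1(x_1 \cdots x_k)\, \sigma_2(\psi^{-1}(v_\ell)) \in \Lambda_n^k \otimes G_n^{2d_1}$, which is the asserted congruence. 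For uniqueness I would use the direct sum decomposition $\Lambda_n^k \otimes G_n^{2d_1} = \bigoplus_{z'} z' \cdot G_n^{2d_1}$ over squarefree monomials $z' \in \Lambda_n^k$: a congruence modulo $\Lambda_n^k \otimes G_n^{2d_1}$ preserves the exterior part, which forces $\sigma_1$ to be the exterior part of $z$, and then $(\ell, \sigma_2)$ is pinned down by the uniqueness clause of Proposition~\ref{proposition:3.2}; the same decomposition, together with $\sigma_2(v_\ell) \not\equiv 0 \mod F_n$ (Section~\ref{section:3}), shows the two alternatives are mutually exclusive.

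The main obstacle I anticipate is purely bookkeeping: keeping straight the index shift $\omega_j(z) \leftrightarrow \omega_{j-1}(w)$ between $N_n$ and $P_n$ under $\psi$, and noting that the operator part $\mathcal{P}_+(Y_n)$ of $G_n$ causes no difficulty because $z' \cdot G_n^{2d_1} \subseteq \Lambda_n^k \otimes G_n^{2d_1}$ holds tautologically; once this is set up, the proof is a direct appeal to Section~\ref{section:3}.
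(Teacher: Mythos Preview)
Your proposal is correct and follows essentially the same approach as the paper: factor $z$ into its exterior part $\sigma_1(x_1\cdots x_k)$ and its polynomial part, push the latter through $\psi$ to $P_n^{d_1}$, and invoke Propositions~\ref{proposition:3.1} and~\ref{proposition:3.2}. Your write-up is in fact more explicit than the paper's on the index shift $\omega_j(z)=\omega_{j-1}(w)$ and on the uniqueness of $(\sigma_1,\ell,\sigma_2)$, which the paper leaves largely implicit.
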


\begin{proof}
Suppose that  $z=x_{i_1} \cdots x_{i_k} \otimes v$ and that $z$ is not in $\Lambda^k \otimes G_n^{2d_1}$.
Then
by Proposition~\ref{proposition:3.1}, we have 
\[
(\omega_0(\psi(v)), \dots, \omega_{k-1}(\psi(v)))= (n-1, \dots, n-1).
\]
So, by Proposition~\ref{proposition:3.2}, there exists the unique pair $(\ell, \sigma_2)$ such that 
\[
\psi(v) \equiv \sigma_2(v_\ell) \mod F_n
\]
in $P_n^{d_1}$.
Let us define $\sigma_1\in \mathrm{Mono}(k)$ by $\sigma_1(j)=i_j$. 
Then, we have
\[
z\equiv 
 \sigma_1(x_1\cdots x_k) \sigma_2(\psi^{-1}(v_\ell)) \mod \Lambda_{n}^k \otimes G_n^{2d_1},
\]
as desired.
\end{proof}

Let $\mathcal{M}_0$ be the set of monomials $z=\sigma_1(x_1\cdots x_k) \sigma_2(\psi^{-1}(v_\ell))$ in 
$\Lambda_{n}^k\otimes Y_n^{2d_1}$ such that $\alpha_i(z)<k$ for some $i$
and $\mathcal{M}_1$ the set of monomials $z=\sigma_1(x_1\cdots x_k) \sigma_2(\psi^{-1}(v_\ell))$ in 
$\Lambda_{n}^k \otimes Y_n^{2d_1}$ such that $\alpha_i(z)=k$ for all $i$.
In Section~\ref{section:3}, we proved that
\[
\{ \pi (\sigma_2 (v_\ell)) \;|\; 1\leq \ell \leq k, \sigma_2 \in \mathrm{Mono}(\ell)\}
\]
 is a basis for $(P_n/F_n)^{d_1}$, 
Since 
\[
\{ \sigma_1(x_1\cdots x_k)\; |\; \sigma_1 \in \mathrm{Mono}(k)\}
\] is a basis for $\Lambda_n^k$, 
$\pi(\mathcal{M}_0 \cup \mathcal{M}_1)$ is a basis for $\Lambda_{n}^k \otimes (Y_n/G_n)^{2d_1}$.
If $\ell \not=k$, then 
$\alpha_i(z)<k$ for some $i$ in $\{ 1, \dots, n\}$.
If $\ell=k$ and $\sigma_1\not=\sigma_2$, then 
$\alpha_i(z)<k$ for some $i$ in $\{ 1, \dots, n\}$.
If $\ell=k$ and $\sigma_1=\sigma_2$, then 
\[
v_k=x_1^{2^k-2^{k-1}-1}\cdots x_{k}^{2^k-2^0-1}x_{k+1}^{2^k-1}\cdots x_n^{2^k-1}
\]
and so
\[
x_1\cdots x_k \psi^{-1}(v_k)=
x_1\cdots x_k y_1^{2^k-2^{k-1}-1}\cdots y_k^{2^{k}-2^0-1} y_{k+1}^{2^k-1}\cdots y_n^{2^k-1}
\]
is the monomial $z_k$ in Proposition~\ref{proposition:1.4}.
Therefore, we have
\begin{align*}
\mathcal{M}_0 &=\{ \sigma_1(x_1\cdots x_k) \sigma_2(\psi^{-1}(v_\ell)) \:|\; 1\leq \ell < k, \sigma_1\in\mathrm{Mono}(k), \sigma_2\in \mathrm{Mono}(\ell)\} 
\\
& \cup 
\{ \sigma_1(x_1\cdots x_k) \sigma_2(\psi^{-1}(v_k)) \:|\; \sigma_1, \sigma_2 \in\mathrm{Mono}(k), \sigma_1\not=\sigma_2
\}, 
\\
\mathcal{M}_1 &=\{ \sigma_1(x_1\cdots x_k) \sigma_2(\psi^{-1}(v_k)) \:|\; \sigma_1, \sigma_2 \in\mathrm{Mono}(k), \sigma_1 =\sigma_2
\}
\\
&=\{ \sigma(z_k) \;|\; \sigma\in \mathrm{Mono}(k)\}.
\end{align*}

\section{Proof of Proposition~\ref{proposition:1.4}}\label{section:5}

Throughout  this section, we suppose that $d=k+2d_1$, $d_1=(n-1)(2^k-1)$, $1\leq k <n$.
The algebra $\mathcal{A}'$ 
 is generated by $Q_0$ and the reduced power operations $\wp^1$, $\wp^2$, $\dots$.
By definition, we have $\mathcal{P}_+(Y_n)\subset G_n$.
Moreover,  $\wp^a(x)=0$ in $N_n$ for $a>0$, $x\in \Lambda_n$.
Therefore, we have
\[
\pi (\wp^a(x\otimes y))=\pi( x \otimes \wp^a(y))=0
\]
 for each monomial $x\otimes y$ in $N_n^{d-2a,*}$.
So, we prove Proposition~\ref{proposition:1.4} by proving the following proposition.


\begin{proposition}\label{proposition:5.1}
For each monomial  $z$ in $N_n^{d-1}$ with $\omega_0(z)=k+1$, 
\[
\pi(Q_0 (z))\in \Lambda_{n}^k \otimes (Y_n/G_n)^{2d_1}
\]
 is a linear combination of $\pi(z')$ {\rm (}$z' \in \mathcal{M}_0${\rm )} and
 $\pi(\sigma_1(z_k)+\sigma_2(z_k))$, where $\sigma_1, \sigma_2 \in \mathrm{Mono}(k)$ and $z_k$ is the monomial
 $z_k$  in Proposition~\ref{proposition:1.4}.
\end{proposition}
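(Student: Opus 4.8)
\textbf{Proof strategy for Proposition~\ref{proposition:5.1}.}
The plan is to make the reduction modulo $F_n$ from Section~\ref{section:3} explicit on $\pi(Q_0(z))$ and to control which basis vectors of $\Lambda_n^k\otimes(Y_n/G_n)^{2d_1}$ can occur. Write the monomial $z$ as $z=x_{i_1}\cdots x_{i_{k+1}}\,v$ with $i_1<\cdots<i_{k+1}$ and $v=y_1^{g_1}\cdots y_n^{g_n}\in Y_n^{2(d_1-1)}$, put $S=\{i_1,\dots,i_{k+1}\}$ and $w=\psi(v)=x_1^{g_1}\cdots x_n^{g_n}\in P_n^{d_1-1}$. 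Since $Q_0$ is a derivation with $Q_0(x_i)=y_i$ and $Q_0(y_i)=0$, one gets
\[
\pi(Q_0(z))=\sum_{a=1}^{k+1}\sigma_a(x_1\cdots x_k)\otimes\pi(x_{i_a}w),
\]
where $\sigma_a\in\mathrm{Mono}(k)$ has image $S\setminus\{i_a\}$ and the second tensor factor is read inside $(P_n/F_n)^{d_1}$ via the isomorphism $\psi$. Because $\{\sigma(x_1\cdots x_k)\}$ is a basis of $\Lambda_n^k$ and $\{\pi(\tau(v_\ell))\}$ a basis of $(P_n/F_n)^{d_1}$, a basis monomial $\sigma(x_1\cdots x_k)\otimes\pi(\tau(v_\ell))$ of $\Lambda_n^k\otimes(Y_n/G_n)^{2d_1}$ lies in $\mathcal{M}_1$ exactly when $\ell=k$ and $\sigma=\tau$, and the vectors in the span of $\mathcal{M}_1$ with vanishing coordinate sum are precisely the linear combinations of the $\pi(\sigma_1(z_k)+\sigma_2(z_k))$. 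Hence Proposition~\ref{proposition:5.1} is equivalent to the single congruence $\sum_{i\in S}c_i\equiv 0\pmod 2$, where $c_i$ denotes the coefficient of $\pi(\sigma_i(v_k))$ in $\pi(x_iw)$ and $\sigma_i\in\mathrm{Mono}(k)$ has image $S\setminus\{i\}$.

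The main work is to determine exactly when $c_i=1$. By Proposition~\ref{proposition:3.1} we have $(\omega_0,\dots,\omega_{k-1})(x_iw)\leq(n-1,\dots,n-1)$; if the inequality is strict then $x_iw\in F_n$ and $c_i=0$. If equality holds, then since $d_1=(n-1)(2^k-1)$ we must have $\omega_t(x_iw)=0$ for all $t\geq k$, so every exponent of $x_iw$ is $<2^k$, and by Proposition~\ref{proposition:3.2} — more precisely by the explicit outcome of the reduction recorded after Lemma~\ref{lemma:3.4} — $\pi(x_iw)$ equals the single basis vector $\pi(\sigma(v_\ell))$ with $\{\sigma(1),\dots,\sigma(\ell)\}=\{j\mid\alpha_j(x_iw)<k\}$. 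Therefore $c_i=1$ if and only if $(\omega_0,\dots,\omega_{k-1})(x_iw)=(n-1,\dots,n-1)$ and $\{j\mid\alpha_j(x_iw)<k\}=S\setminus\{i\}$. Translating these two conditions into the exponents — using $e_j(x_iw)=g_j$ for $j\neq i$, $e_i(x_iw)=g_i+1$, and reading off binary digits — is the one genuinely computational step, and I expect it to yield the clean equivalence: $c_i=1$ holds if and only if
\[
g_i=2^k-2,\qquad \{\,g_j\mid j\in S\setminus\{i\}\,\}=\{\,2^k-1-2^t\mid 0\leq t\leq k-1\,\},\qquad g_j=2^k-1\ (j\notin S),
\]
the middle identity being a bijection of $k$-element sets of integers.

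Finally, the congruence will follow from the numerical coincidence $2^k-1-2^0=2^k-2$. If $c_i=1$ for some $i\in S$, then $w$ has the form above, so the multiset $\{g_j\mid j\in S\}$ equals $\{2^k-2,\,2^k-2,\,2^k-1-2^1,\dots,2^k-1-2^{k-1}\}$, in which the value $2^k-2$ occurs with multiplicity exactly two; since $c_i=1$ forces $g_i=2^k-2$, exactly two indices $i\in S$ can have $c_i=1$, and one checks directly (deleting $g_{i'}=2^k-2$ from that multiset returns the required $k$-element set) that both of them do. If no index of $S$ has $c_i=1$, the sum is $0$ for trivial reasons. In either case $\sum_{i\in S}c_i\equiv 0\pmod 2$, which proves Proposition~\ref{proposition:5.1}. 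The hard part is the rigidity equivalence of the second paragraph — turning the basis-theoretic condition $c_i=1$ into the explicit exponent list via Propositions~\ref{proposition:3.1}–\ref{proposition:3.2} and the bit-permutation reductions of Section~\ref{section:3}; once that is in place, the evenness is a one-line pairing argument.
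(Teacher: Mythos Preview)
Your strategy is correct and the computational equivalence you ``expect'' does hold: once $(\omega_0,\dots,\omega_{k-1})(x_iw)=(n-1,\dots,n-1)$ forces every exponent of $x_iw$ below $2^k$, the conditions $\alpha_i(x_iw)=k$ and $\{j:\alpha_j(x_iw)<k\}=S\setminus\{i\}$ pin down $g_i+1=2^k-1$, $g_j=2^k-1$ for $j\notin S$, and (by a pigeonhole on the $k$ zero-bits distributed among the $k$ indices in $S\setminus\{i\}$) exactly one missing bit per $j\in S\setminus\{i\}$, giving the exponent list you wrote. The pairing via $2^k-1-2^0=2^k-2$ then finishes it.

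Your route, however, is genuinely different from the paper's. The paper does not expand $Q_0(z)$ term by term and chase basis coefficients. Instead it argues by cases on $\omega_1(z)$: a divisibility count rules out $\omega_1(z)=n-1$; the case $\omega_1(z)<n-2$ lands in $\Lambda_n^k\otimes G_n^{2d_1}$ trivially; for $\omega_1(z)=n$ the key device is the relation $Q_0Q_0=0$, which lets one replace $z$ by a sum of monomials with $\omega_1=n-2$ having the same $Q_0$-image (Proposition~\ref{proposition:5.2}); and finally for $\omega_1(z)=n-2$ one checks directly that either some $\alpha_i(z)<k$ (forcing $\mathcal{M}_0$-terms only) or, after a harmless permutation, $Q_0(z)\equiv\sigma_1(z_k)+\sigma_2(z_k)$ (Proposition~\ref{proposition:5.3}). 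The paper's argument is shorter and conceptually cleaner, the $Q_0^2=0$ trick absorbing precisely the combinatorics you unwind by hand; your argument is more explicit and has the merit of identifying exactly which monomials $z$ contribute $\mathcal{M}_1$-terms and exhibiting the pairing directly, without ever invoking $Q_0^2=0$.
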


From Propositions~\ref{proposition:4.1} and \ref{proposition:5.1}, we have that $\pi(z_k)$ is not  in 
\begin{align*}
&\pi(Q_0(\Lambda_{n}^{k+1} \otimes Y_n^{2(d_1-1)}))
\\
=\; & \pi(Q_0(\Lambda_{n}^{k+1} \otimes Y_n^{2(d_1-1)}))+
\sum_{a>0} \pi(\wp^a(\Lambda_{n}^{k} \otimes Y_n^{2(d_1-a)})
\\
=\; &\pi (\mathcal{A}_+'(N_n))
\end{align*}
Thus, once we prove Proposition~\ref{proposition:5.1}, we complete the proof of Proposition~\ref{proposition:1.4}.

Let $z$ be a monomial in $\Lambda_n^{k+1}\otimes Y_n^{2(d_1-1)}$. 

If $\omega_1(z)=n-1$, then 
\begin{align*}
\omega_2(z) \cdot 2^2+\omega_3(z)\cdot 2^3+\cdots &= d-1-(k+1)-2(n-1)
\\
&=2d_1 -2n
\\
&=2(n-1) (2^k-1)-2n
\\
&= (n-1)2^{k+1}+2.
\end{align*}
Since $k\geq 1$,  $(n-1)2^{k+1}+2$ is not divisible by $4$ but 
$\omega_2(z) \cdot 2^2+\omega_3(z)\cdot 2^3+\cdots$ is divisible by $4$. It is a contradiction. So, 
$\omega_1(z) \not=n-1$. 

If $\omega_1(z)<n-2$, then $Q_0(z)$ is a linear combination of monomials $z'$ such that $\omega_0(z')=k$ and 
$\omega_1(z')<n-1$.
Hence, $Q_0(z) \in \Lambda_{n}^k \otimes G_n^{2d_1}$.

There remain two cases: $\omega_1(z)=n$ or $n-2$.
First, we deal with the case $\omega_1(z)=n$.


\begin{proposition}\label{proposition:5.2}
Suppose that $z$ is a monomial in $N_n^{d-1}$ such that
\[
(\omega_0(z), \omega_1(z))=(k+1,n).
\]
Then, there exist  monomials $z'$ such that 
\[
(\omega_0(z'), \omega_1(z'))=(k+1, n-2)
\]
and 
\[
Q_0(z)=Q_0(\sum z').
\]
\end{proposition}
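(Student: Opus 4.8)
\textbf{Proof proposal for Proposition~\ref{proposition:5.2}.}

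The plan is to produce the monomials $z'$ explicitly by a local rearrangement of the ``digit columns'' $u_j(z)$, just as in the proofs of Lemmas~\ref{lemma:3.3} and \ref{lemma:3.4}, but now working in $N_n$ rather than in $P_n$ and keeping track of the $\Lambda_n$-factor separately. Write $z = x_{i_1}\cdots x_{i_{k+1}} \otimes v$ with $v \in Y_n^{2(d_1-1)}$. The condition $\omega_1(z)=n$ means that, in the exponent of $v$, the first binary digit (the $2^1$-place of the $y$-exponents, equivalently $\alpha_{i1}(z)=1$ for every $i$) is fully occupied: every variable $y_i$ contributes a $1$ in column $j=1$. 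So $v = (y_1\cdots y_n)^2 \cdot v_1^4$ for a monomial $v_1$, and the total degree forces $v_1$ to carry the remaining weight. Since $Q_0$ is a derivation that lowers the $x$-degree by one and raises the $y$-degree by one, and since $Q_0$ annihilates anything of the form $w^2$, the only effect of $Q_0$ on $z$ comes from the $x$-factor: $Q_0(z) = \sum_{a=1}^{k+1} x_{i_1}\cdots \widehat{x}_{i_a}\cdots x_{i_{k+1}} \otimes y_{i_a} v$.

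First I would record the identity, valid in $N_n$ by the Cartan formula and the vanishing of $\wp^c$ on $\Lambda_n$ together with $\wp^c(w^2)=(\wp^{c/2}w)^2$ or $0$, that for a suitable monomial $w$ one has $\wp^{2}(w) \equiv (\text{a monomial with a strictly smaller } \omega\text{-vector}) + (\text{terms}) $ allowing one to move two units from column $1$ up into column $2$ at the cost of hit elements and lower-$\omega$ monomials. More concretely I would mimic Lemma~\ref{lemma:3.3}: choose $w$ so that $u_1(w) = (1,\dots,1)$ (the full column) and $u_2(w)$ records the coincidences between columns $1$ and $2$ of the target, with all other columns of $w$ equal to those of $z$; then $\wp^{2^1}(w)$ is, modulo monomials with $\omega$-vector strictly below $(\ast,n,\dots)$ and modulo $\mathcal{P}_+(Y_n)\subseteq G_n$, a sum $z'' + (\text{stuff})$ where $z''$ has $\omega_1 = n-2$. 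The key point is that all of this takes place inside the $Y_n$-tensor factor, so it is compatible with any fixed $\Lambda_n^{k+1}$-monomial in front; hence $z \equiv \sum z''$ modulo $\mathcal{A}'_+(N_n)$ with each $z''$ satisfying $(\omega_0(z''),\omega_1(z'')) = (k+1,n-2)$.

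The step I expect to be the real obstacle is not the rearrangement itself but the upgrade from ``$z \equiv \sum z' \bmod \mathcal{A}'_+(N_n)$'' to the sharp statement ``$Q_0(z) = Q_0(\sum z')$'' on the nose. One has to check that the hit terms and the lower-$\omega$ monomials produced by the $\wp^2$-manipulation are themselves already in the image of $Q_0$ applied to $\Lambda_n^{k+1}\otimes Y_n^{2(d_1-1)}$, or else that they can be absorbed: since every monomial $z$ with $\omega_0(z)=k+1$ appearing here has all its $x$-variables distinct and $v$ is a square times $(y_1\cdots y_n)^2$ (or its $\omega_1=n-2$ analogue), and since $Q_0$ is injective on the span of such monomials modulo the even part, the identity $Q_0(z)=Q_0(\sum z')$ should follow once one verifies that $z + \sum z'$ lies in $\ker Q_0 + (\text{even squares})$, i.e. that the discrepancy is a sum of squares on which $Q_0$ vanishes. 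I would carry out: (i) the explicit description of $z$ as a square times $(y_1\cdots y_n)^2$; (ii) the $\wp^2$-identity à la Lemma~\ref{lemma:3.3} transplanted to $N_n$; (iii) the bookkeeping showing the error terms are squares or have $\omega_1<n-2$ hence are annihilated by $Q_0$ or fall into $G_n$; (iv) conclude $Q_0(z)=Q_0(\sum z')$ by the derivation property. Steps (i), (ii), (iv) are routine; step (iii), the precise matching of error terms, is where care is needed.
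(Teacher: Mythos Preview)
Your approach has a genuine gap at precisely the step you flag as delicate, and the paper avoids the issue by a completely different (and much shorter) argument.

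The paper uses no $\wp$-operations at all. It simply writes down a monomial $z''\in\Lambda_n^{k+2}\otimes Y_n$ obtained from $z$ by trading one $y$-factor for an $x$-factor: after normalising so that $u_0(z)=(1,\dots,1,0,\dots,0)$ and $u_1(z)=(1,\dots,1)$, take $u_0(z'')=(1,\dots,1,1,0,\dots,0)$, $u_1(z'')=(1,\dots,1,0,1,\dots,1)$, and $u_a(z'')=u_a(z)$ for $a\ge 2$. Expanding $Q_0(z'')$ by the derivation rule, one of the summands is $z$ itself and each of the remaining summands $z'$ has $(\omega_0(z'),\omega_1(z'))=(k+1,n-2)$. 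The relation $Q_0Q_0=0$ then gives $Q_0(z)=Q_0(\sum z')$ \emph{exactly}, with no error terms to manage.

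Your route cannot yield this exact equality. A $\wp^a$-relation produces $z-\sum z' = x_I\otimes \wp^a(w) + (\text{lower-}\omega\text{ terms})$ inside $\Lambda_n^{k+1}\otimes Y_n$, and applying $Q_0$ to the right-hand side gives $Q_0(x_I)\cdot\wp^a(w)+\cdots$, which is nonzero: $Q_0$ is a derivation that hits the $\Lambda_n$-factor, not just the $Y_n$-factor. Your fix (iii), that the discrepancy consists of squares killed by $Q_0$, fails for the same reason: an element of the form $x_I\otimes s^2$ with $s\in Y_n$ is not a square in $N_n$, and $Q_0(x_I\otimes s^2)=Q_0(x_I)\cdot s^2\neq 0$. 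So the upgrade from a congruence modulo $\mathcal{A}'_+(N_n)$ to the literal identity $Q_0(z)=Q_0(\sum z')$ does not go through along these lines. (There are also smaller slips: $\omega_1(z)=n$ gives $v=y_1\cdots y_n\cdot v_2^{2}$, not $(y_1\cdots y_n)^2 v_1^{4}$; and to alter column~$1$ of $z$ via the Lemma~\ref{lemma:3.3} mechanism one would need $\wp^{1}$, not $\wp^{2}$.)

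The general lesson: when the target statement is an equality of the shape $Q_0(\,\cdot\,)=Q_0(\,\cdot\,)$, the natural tool is $Q_0^2=0$ applied to a suitably chosen preimage, not a Steenrod-type congruence.
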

\begin{proof}
Without loss of generality, we may assume that 
\begin{align*}
u_0(z)&=(1, \dots, 1, 0, 0, \dots, 0),
\\
u_1(z)&=(1, \dots, 1, 1, 1, \dots, 1).
\end{align*}
Let $z''$ be a monomial such that 
\begin{align*}
u_0(z'')&=(1, \dots, 1, 1, 0, \dots, 0), 
\\
u_1(z'')&=(1, \dots, 1, 0, 1, \dots, 1)
\end{align*}
and $u_a(z'')=u_a(z)$ for $a \geq 2$.
Then, 
\[
Q_0(z'')=z+\sum z'
\]
where $\sum z'$  is a linear combination of monomials $z'$ such that $(\omega_0(z'), \omega_1(z'))=(k+1, n-2)$.
Since $Q_0Q_0= 0$, we have that
\[
Q_0(z)=Q_0(\sum z')
\]
as desired.
\end{proof}

So, Proposition~\ref{proposition:5.3} below completes the proof of Proposition~\ref{proposition:5.1}.


\begin{proposition}\label{proposition:5.3}
Suppose that $z$ is a monomial in $N_n^{d-1}$ such that
\[
(\omega_0(z), \omega_1(z))=(k+1,n-2).
\]
If $\alpha_i(z)<k$ for some $i$, then 
$Q_0(z)$ is congruent to a linear combination of monomials in $\mathcal{M}_0$ modulo $\Lambda_n^{k} \otimes G_n^{2d_1}$.
If  $\alpha_i(z)=k$ for all $i \in \{1, \dots, n\}$, then 
\[
Q_0(z)\equiv \sigma_1(z_k)+\sigma_2(z_k) \mod \Lambda_{n}^k \otimes G_n^{2d_1}.
\]
\end{proposition}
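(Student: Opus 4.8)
The plan is to make the derivation $Q_0$ completely explicit on $z$ and to track, summand by summand, which terms survive the projection $\pi$ to $\Lambda_n^{k}\otimes(Y_n/G_n)^{2d_1}$. Write $z=x_{i_1}\cdots x_{i_{k+1}}\prod_{i=1}^{n}y_i^{e_i}$; the hypothesis $\omega_0(z)=k+1$ is exactly the statement that the $x$-support $\{i_1,\dots,i_{k+1}\}$ has $k+1$ elements, and comparing degrees gives $\sum_i e_i=d_1-1$. Since $\alpha_{i1}(z)$ is the parity of $e_i$, the hypothesis $\omega_1(z)=n-2$ says that exactly two indices $p$ and $q$ have $e_p$ and $e_q$ even. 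As $Q_0$ is a derivation with $Q_0(x_i)=y_i$ and $Q_0(y_i)=0$, one has $Q_0(z)=\sum_{s=1}^{k+1}\lambda_s w_s$ with $\lambda_s=\prod_{t\neq s}x_{i_t}\in\Lambda_n^{k}$ and $w_s=y_{i_s}\prod_i y_i^{e_i}\in Y_n^{2d_1}$.

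First I would discard most terms. The exponent of $x_i$ in $\psi(w_s)\in P_n^{d_1}$ is $e_i$ for $i\neq i_s$ and $e_{i_s}+1$ for $i=i_s$, so $\omega_0(\psi(w_s))$ is $n-3$ when $e_{i_s}$ is odd and $n-1$ when $e_{i_s}$ is even; in the first case Proposition~\ref{proposition:3.1} forces $(\omega_0,\dots,\omega_{k-1})(\psi(w_s))<(n-1,\dots,n-1)$, so $w_s\in G_n$ and $\pi(\lambda_s w_s)=0$, and only the $s$ with $i_s\in\{p,q\}$ contribute. For those $s$, passing from $e_{i_s}$ to $e_{i_s}+1$ merely flips bit $0$, so $\omega_j(\psi(w_s))=\omega_{j+1}(z)$ for $j\geq 1$; hence if $\omega_j(z)<n-1$ for some $j$ with $2\leq j\leq k$, then every $\pi(\lambda_s w_s)$ vanishes and $Q_0(z)\equiv0$, while otherwise $\psi(w_s)$ has the maximal $\omega$-vector and Proposition~\ref{proposition:3.2} gives the unique pair $(\ell_s,\sigma_2^{(s)})$ with $\psi(w_s)\equiv\sigma_2^{(s)}(v_{\ell_s})\bmod F_n$. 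Consequently $\pi(\lambda_s w_s)=\pi(z_s'')$ for the monomial $z_s''=\sigma_1^{(s)}(x_1\cdots x_k)\,\sigma_2^{(s)}(\psi^{-1}(v_{\ell_s}))$ of $\mathcal M_0\cup\mathcal M_1$, where $\sigma_1^{(s)}$ is the monotone function with image $\{i_1,\dots,i_{k+1}\}\setminus\{i_s\}$.

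The key point is then the identity $\alpha_i(z_s'')=\alpha_i(z)$ for every $i$. It rests on the observations that $\alpha(2e)=\alpha(e)$, $\alpha(2e+1)=\alpha(e)+1$, and that $\alpha_i$ of a product $\sigma_1(x_1\cdots x_k)\cdot w$ with $w\in Y_n$ equals $\alpha$ of the $y_i$-exponent of $w$, augmented by $1$ when $i$ lies in the image of $\sigma_1$: the $Y$-factors of $z_s''$ and of $w_s$ have the same $\alpha_i$ for all $i$ because the column-permutation moves of Lemmas~\ref{lemma:3.3} and \ref{lemma:3.4} and the isomorphism $\psi$ preserve each row-sum $\alpha_i$, and passing from $z$ to $\lambda_s w_s$ deletes $x_{i_s}$ while raising the $y_{i_s}$-exponent from the even number $e_{i_s}$ to $e_{i_s}+1$, which leaves every $\alpha_i$ unchanged. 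Granting this, if $\alpha_i(z)<k$ for some $i$ then every surviving $z_s''$ has $\alpha_i(z_s'')<k$ and so lies in $\mathcal M_0$, which proves the first assertion; if instead $\alpha_i(z)=k$ for all $i$, every surviving $z_s''$ lies in $\mathcal M_1=\{\sigma(z_k)\}$ and it remains only to show that the number of survivors is $0$ or $2$.

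For that last point, in the subcase where the $\omega$-vector can be maximal, combining $\alpha_i(z)=k$ with $\omega_0(z)=k+1$, $\omega_1(z)=n-2$ and the total degree forces $\omega_j(z)=0$ for $j>k$, hence $e_i=2^k-1$ for $i\notin\{i_1,\dots,i_{k+1}\}$ and $e_i=2^k-1-2^{c_i-1}$ with $1\leq c_i\leq k$ otherwise; such an $e_i$ is even precisely when $c_i=1$, and matching the count of odd $e_i$ against $\omega_1(z)=n-2$ forces exactly two indices $i$ with $c_i=1$. These are precisely the indices with $e_i$ even, namely $p$ and $q$, and they lie in $\{i_1,\dots,i_{k+1}\}$, so both corresponding summands of $Q_0(z)$ appear and survive, giving distinct elements $\sigma_1(z_k)\neq\sigma_2(z_k)$ of $\mathcal M_1$; when the $\omega$-vector cannot be maximal we already saw $Q_0(z)\equiv0$, which is of the asserted form with $\sigma_1=\sigma_2$. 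I expect the main obstacle to be this rigidity step: extracting from the two prescribed values $\omega_0(z),\omega_1(z)$ and the degree that, once $\psi(w_s)$ is allowed a maximal $\omega$-vector, every $e_i$ has the rigid form above and hence both even-exponent indices are forced into the $x$-support. The term-counting in the first reduction and the $\alpha_i$-invariance are then routine, the latter being the same bookkeeping already used in Section~\ref{section:3}.
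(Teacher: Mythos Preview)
Your overall strategy matches the paper's: expand $Q_0(z)$, discard the summands with $\omega_0(\psi(w_s))<n-1$, and use Proposition~\ref{proposition:3.2} to reduce each survivor to a basis monomial $z_s''\in\mathcal M_0\cup\mathcal M_1$. The rigidity computation in your last paragraph (pinning down the $e_i$'s when all $\alpha_i(z)=k$) is correct and more explicit than the paper's ``without loss of generality'' argument.

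There is, however, a genuine gap in your key identity $\alpha_i(z_s'')=\alpha_i(z)$. You justify it by asserting that ``the column-permutation moves of Lemmas~\ref{lemma:3.3} and \ref{lemma:3.4} \dots\ preserve each row-sum $\alpha_i$.'' This is true for Lemma~\ref{lemma:3.3} (a genuine column swap) but false for Lemma~\ref{lemma:3.4}. In that lemma one passes from columns $(u_j,u_{j+1},u_{j+2})=(a,a,b)$ to $(a,b,b)$; the row-sum at the zero of $a$ goes up by $1$ and the row-sum at the zero of $b$ goes down by $1$. Concretely, with $n=4$, $k=3$, the monomial $\psi(w_s)=x_1^{3}x_2^{4}x_3^{7}x_4^{7}$ (which does arise from an admissible $z$, e.g.\ $z=x_1x_2x_3x_4\,y_1^{2}y_2^{4}y_3^{7}y_4^{7}$) reduces to $v_2=x_1^{1}x_2^{6}x_3^{7}x_4^{7}$, and $\alpha_1$ drops from $2$ to $1$ while $\alpha_2$ rises from $1$ to $2$. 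So the equality $\alpha_i(z_s'')=\alpha_i(z)$ fails in general.

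What survives the reduction is the weaker invariant recorded immediately after Lemma~\ref{lemma:3.4}: the \emph{set} $\{i:\alpha_i(v)<k\}$ equals $\{\sigma_2(1),\dots,\sigma_2(\ell)\}$, i.e.\ it is unchanged by the reduction $v\mapsto\sigma_2(v_\ell)$. This is what the paper's terse argument is really using. For the first assertion, from $\alpha_i(z)<k$ you get $\alpha_i(\lambda_s w_s)\le\alpha_i(z)<k$, hence $\alpha_i(\psi(w_s))<k$, and one then checks (using that set-invariant together with a short degree count forcing $\omega_j(z)=0$ for $j>k$) that $z_s''$ cannot land in $\mathcal M_1$. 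For the second assertion your rigidity argument already does the work: in that case the columns of $\psi(w_s)$ are pairwise distinct, so only Lemma~\ref{lemma:3.3} is needed and row-sums really are preserved, giving $\operatorname{im}\sigma_1^{(s)}=\operatorname{im}\sigma_2^{(s)}$ and $\ell=k$, i.e.\ $z_s''\in\mathcal M_1$. Replace the exact $\alpha_i$-preservation claim by this set-level statement and your proof goes through.
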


\begin{proof}
In what follows, we consider everything modulo $\Lambda_n^{k} \otimes G_{n}^{2d_1}$.
The element $Q_0(z)$ is congruent to a linear combination of monomials $z'$ such that 
$\alpha_i(z')\leq \alpha_i(z)$ for all $i \in \{1, \dots, n\}$. 
Hence, if $\alpha_i(z)<k$ for some $i$, then $Q_0(z)$ is congruent to a linear combination of  $z'$ 
such that $\alpha_i(z')<k$. 
Hence, it is congruent to a linear combination of elements in $\mathcal{M}_0$.

If $\omega_1(z)=n-2$ and $\alpha_i(z)=k$ for all $i \in \{1, \dots, n\}$, 
then without loss of generality, we may assume that 
\begin{align*}
u_0(z)&=(1,1, 1, \dots , 1, 0, \dots, 0),
\\
u_1(z)&=(0,0,1, \dots, 1, 1, \dots,  1).
\end{align*}
Then, $Q_0(z)$  is congruent to $\sigma_1(z_k)+\sigma_2(z_k)$ 
where $\sigma_1, \sigma_2$ in $\mathrm{Mono}(k)$, $\sigma_1(1)=1$, $\sigma_1(2)=3, \dots, \sigma_1(k)=k+1$, 
$\sigma_2(1)=2, \dots, \sigma_2(k)=k+1$.
\end{proof}


\begin{bibdiv}

\begin{biblist}

\bib{ault-2014}{article}{
   author={Ault, Shaun},
   title={Bott periodicity in the hit problem},
   journal={Math. Proc. Cambridge Philos. Soc.},
   volume={156},
   date={2014},
   number={3},
   pages={545--554},
   issn={0305-0041},
   review={\MR{3181639}},
   doi={10.1017/S0305004114000085},
}

\bib{dugger-isaksen-2010}{article}{
   author={Dugger, Daniel},
   author={Isaksen, Daniel C.},
   title={The motivic Adams spectral sequence},
   journal={Geom. Topol.},
   volume={14},
   date={2010},
   number={2},
   pages={967--1014},
   issn={1465-3060},
   review={\MR{2629898}},
   doi={10.2140/gt.2010.14.967},
}

\bib{minami-1995}{article}{
   author={Minami, Norihiko},
   title={The Adams spectral sequence and the triple transfer},
   journal={Amer. J. Math.},
   volume={117},
   date={1995},
   number={4},
   pages={965--985},
   issn={0002-9327},
   review={\MR{1342837}},
   doi={10.2307/2374955},
}

\bib{pengelley-williams-2015}{article}{
   author={Pengelley, David},
   author={Williams, Frank},
   title={Sparseness for the symmetric hit problem at all primes},
   journal={Math. Proc. Cambridge Philos. Soc.},
   volume={158},
   date={2015},
   number={2},
   pages={269--274},
   issn={0305-0041},
   review={\MR{3310245}},
   doi={10.1017/S0305004114000668},
}

\bib{singer-1989}{article}{
   author={Singer, William M.},
   title={The transfer in homological algebra},
   journal={Math. Z.},
   volume={202},
   date={1989},
   number={4},
   pages={493--523},
   issn={0025-5874},
   review={\MR{1022818}},
   doi={10.1007/BF01221587},
}

\bib{sum-2015}{article}{
   author={Sum, Nguyen},
   title={On the Peterson hit problem},
   journal={Adv. Math.},
   volume={274},
   date={2015},
   pages={432--489},
   issn={0001-8708},
   review={\MR{3318156}},
   doi={10.1016/j.aim.2015.01.010},
}

\bib{voevodsky-2003}{article}{
   author={Voevodsky, Vladimir},
   title={Reduced power operations in motivic cohomology},
   journal={Publ. Math. Inst. Hautes \'Etudes Sci.},
   number={98},
   date={2003},
   pages={1--57},
   issn={0073-8301},
   review={\MR{2031198}},
   doi={10.1007/s10240-003-0009-z},
}

\bib{walker-wood-2018}{book}{
   author={Walker, Grant},
   author={Wood, Reginald M. W.},
   title={Polynomials and the ${\rm mod}\, 2$ Steenrod algebra. Vol. 1. The
   Peterson hit problem},
   series={London Mathematical Society Lecture Note Series},
   volume={441},
   publisher={Cambridge University Press, Cambridge},
   date={2018},
   pages={xxiv+346},
   isbn={978-1-108-41448-7},
   isbn={978-1-108-41406-7},
   review={\MR{3729477}},
}

\bib{wood-1989}{article}{
   author={Wood, R. M. W.},
   title={Steenrod squares of polynomials and the Peterson conjecture},
   journal={Math. Proc. Cambridge Philos. Soc.},
   volume={105},
   date={1989},
   number={2},
   pages={307--309},
   issn={0305-0041},
   review={\MR{974986}},
   doi={10.1017/S0305004100067797},
}

\bib{yagita-2005}{article}{
   author={Yagita, Nobuaki},
   title={Applications of Atiyah-Hirzebruch spectral sequences for motivic
   cobordism},
   journal={Proc. London Math. Soc. (3)},
   volume={90},
   date={2005},
   number={3},
   pages={783--816},
   issn={0024-6115},
   review={\MR{2137831}},
   doi={10.1112/S0024611504015084},
}

	\end{biblist}
	
	\end{bibdiv}

\end{document}